\documentclass[11pt]{amsart}
\usepackage[margin=1in]{geometry}
\usepackage{version, amsmath, amssymb, mathtools}
\usepackage{color}
\usepackage{amsthm, amsopn, amsfonts}
\usepackage[colorlinks=true]{hyperref}
\usepackage{labelschanged}
\usepackage{comment}

\usepackage[margin=1in]{geometry}
\usepackage{version, mathtools}
\usepackage{color}
\usepackage{amsopn}
\usepackage{mathrsfs}
\usepackage{slashed}
\usepackage{physics}
\usepackage{breqn}
\usepackage[nocompress]{cite}
\usepackage[all,cmtip]{xy}
\usepackage{esint}
\usepackage{enumitem}

\newcommand{\pa}{{\partial}}

\renewcommand{\Im}{\text{Im}}

\newtheorem{theorem}{Theorem}[section]

\newtheorem{lemma}[theorem]{Lemma}
\newtheorem{corr}[theorem]{Corollary}

\newtheorem{proposition}[theorem]{Proposition}
\newtheorem{definition}[theorem]{Definition}
\newtheorem{remark}[theorem]{Remark}

\newtheorem{assumption}{Assumption}

\newcommand{\supp}{\text{supp }}
\newcommand{\R}{{\mathbb R}}

\newcommand{\la}{\langle}
\newcommand{\ra}{\rangle}

\renewcommand{\1}{{\mathbf 1}}
\newcommand{\e}{\epsilon}

\newcommand{\pd}{\partial}

\newcommand{\qn}[1]{{\left\vert\kern-0.25ex\left\vert\kern-0.25ex\left\vert #1 
    \right\vert\kern-0.25ex\right\vert\kern-0.25ex\right\vert}}

\newcommand{\tP}{\tilde{P}}

\newcommand{\A}{\mathcal{A}}
\newcommand{\opw}{\text{Op}^{\mathrm{w}}}

\begin{document}
\title{Local Energy Decay for Non-Stationary Damped Wave Operators}
\author{Nicholas Arsenault}
\address{Department of Mathematics, University of Kentucky, Lexington,
  KY 40506}
\email{nick.arsenault@uky.edu}
\author{Mihai Tohaneanu}
\address{Department of Mathematics, University of Kentucky, Lexington,
  KY 40506}
\email{mihai.tohaneanu@uky.edu}
\begin{abstract}
   The paper establishes integrated local energy decay (ILED) estimates for the damped wave equation on certain non-stationary spacetimes. The main technical result is a high frequency estimate that holds in great generality, provided that null geodesics trapped in a compact region are sufficiently damped. 
\end{abstract}
\maketitle
\emph{\emph{}}

\section{Introduction}

The damped wave equation is used in physics as a model to describe various phenomena involving wave propagation in the presence of friction. A relevant example is the equation 
\begin{equation}\label{dampcpt}
\partial_t^2 u - \Delta_g u + a(x)\partial_t u = 0.    
\end{equation}
Here $(M, g)$ is a three-dimensional Riemannian manifold,  $\Delta_g$ is the associated Laplace-Beltrami operator, and $a\in C^{\infty}(M)$ is the damping function satisfying $a(x)\geq 0$.

Let
\[
E[u](T) = \frac12 \int_{M} |\pa_t u(T, x)|^2 + g^{ij}\pa_i u\pa_j u dV_g \approx \int_{M} |\pa_{t, x} u(T, x)|^2 dV_g
\]
denote the associated energy. Multiplying \eqref{dampMink} by $\partial_t u$ and integrating by parts immediately yields that
\[
E[u](T) = E[u](0) - \int_0^T \int_{\R^3} a(x)|\partial_t u(t,x)|^2 dx dt \leq E[u](0)
\]
which establishes uniform boundedness of the energy. 

A relevant question to ask is what conditions one needs to impose on $a(x)$ so that the energy decays. In the case of compact manifolds $M$, one can prove exponential decay of the energy, under the geometric control condition (GCC), namely there is a fixed $L>0$ so that every null bicharacteristic intersects the set $\{a(x)>0\}$ in time less than $L$; see, for example, \cite{RT75}, \cite{BLR92}, \cite{kleinhenz2023energydecaytimedependent}. More precisely, one has
\begin{equation}\label{expcpt}
E[u](T) \leq Ce^{-ct} E[u](0)
\end{equation}
for some constants $C$, $c$. When the assumption fails, assuming extra regularity on the initial data, one can generically prove that the energy decays at most logarithmically, though the rate could be better depending on various geometries; see for example \cite{burq1998decroissance}, \cite{Leb96}, \cite{Chr07}, \cite{LiuRao05}, \cite{AnLe14}, \cite{kleinhenz2025sharpenergydecayrates}.      

Things are considerably different if the manifold $M$ is not compact. Even in the most basic case
\begin{equation}\label{dampMink}
\partial_t^2 u - \Delta u + a(x)\partial_t u = 0,    
\end{equation}
\eqref{expcpt} does not hold even if GCC does (though it does hold for the Klein-Gordon equation \cite{BJ16}). 
More refined arguments yield pointwise decay and Strichartz estimates of the solution to \eqref{dampMink} \cite{Mat76}, \cite{IIOW19}. 

In the present paper we will be interested in establishing integrated local energy estimates (ILED) for damped wave operators \eqref{op}. Such estimates originated in the work of Morawetz \cite{Mor68} and have played a crucial role in understanding the dispersive properties of linear solutions to undamped waves, as well as global well-posedness and asymptotic behavior of solutions to nonlinear equations. There has been an enormous amount of work done in this direction for undamped operators, see the references in Section 2. On the other hand, local energy estimates for the damped wave operators and variable coefficients have also been recently studied, but only in the case of time-independent coefficients, see \cite{Bouclet_2014}, \cite{kofroth2025integratedlocalenergydecay}, \cite{KleinhenzMcNulty2025energydecaytimedependent}.

Our main result Theorem~\ref{main} establishes ILED for damped wave operators with coefficients that may be allowed to depend on time. There are two parts to it. The first part, which is the main contribution of the paper, establishes a two point ILED estimate: with minimal assumptions on the coefficients (in particular no smallness assumptions), and assuming that the GCC holds (see Assumption~\ref{ass:GCC}), we can control the local energy on a time interval $[0,T]$ by the energy at initial and final time and lower order spacetime terms. The second part is an unconditional ILED result, under the additional assumption that the coefficients are almost stationary (see Assumption~\ref{asmp:sv}). It would be interesting to understand to what degree Assumption~\ref{asmp:sv} can be weakened, and if ILED holds without any smallness assumptions in this context.

\subsection{Acknowledgments} The authors are grateful to Jason Metcalfe, Perry Kleinhenz,  Collin Kofroth, and Daniel Tataru, for conversations about their results.

\section{Preliminaries and Setup}
\label{sec:notation}
\subsection{The Damped Wave Operator}

Let $(\mathbb{R}^4, g)$ denote a Lorentzian manifold, where $g$ is a metric with signature $(-,+,+,+)$. Throughout this work, we will study operators of the form
\begin{equation}\label{op}
    P = \Box_g + i a D_t,
\end{equation}
where $\Box_g = D_\alpha g^{\alpha\beta} D_\beta$ and $D_\alpha = \frac{1}{i} \partial_\alpha$. We adopt the standard convention that Greek indices $\alpha, \beta,\ldots$ range from $0$ to $3$, while Latin indices $i, j, k, \ldots$ range from $1$ to $3$. The Einstein summation convention is used throughout.

The function $a \in C_c^\infty(\mathbb{R}^3)$ is non-negative and positive on an open set; we call $a$ the \emph{damping function}. The presence of the damping term $ia D_t$ models energy dissipation in the system and is crucial for obtaining decay estimates.

\subsection{Local Energy Norms}

To study the behavior of solutions to the damped wave equation, we introduce several weighted mixed-norm spaces. We work with spaces $L^p L^q$, where the $L^p$ norm is taken in the time variable and $L^q$ in the spatial variables. For $R \geq 1$, let $A_R = \{x \in \mathbb{R}^3 : R \leq \langle x \rangle \leq 2R\}$ denote the dyadic annulus at scale $R$, where $\langle x \rangle = (1 + |x|^2)^{1/2}$ is the Japanese bracket. We also define 
\[
A_{\leq R} = \{x \in \mathbb{R}^3 : \langle x \rangle \leq 2R\}
\]
\[
A_{\geq R} = \{x \in \mathbb{R}^3 : R \leq \langle x \rangle \}
\]

\begin{definition}[Local energy norms]
We define the local energy norm by
\begin{align}
    \|u\|_{LE} &= \sup_{j \in \mathbb{N}} \|\langle x \rangle^{-\frac{1}{2}} u\|_{L^2 L^2(\mathbb{R}_+ \times A_{2^j})}, \\
    \|u\|_{LE^1} &= \|u\|_{LE} + \|\langle x \rangle^{-1} \partial u\|_{LE},
\end{align}
where $\partial = (\partial_t, \nabla)$ denotes the full space-time gradient.
\end{definition}

For the inhomogeneous term in our wave equation, we require the dual-type norm
\begin{equation}
    \|F\|_{LE^*} = \sum_{j=0}^\infty \|\langle x \rangle^{\frac{1}{2}} F\|_{L^2 L^2(\mathbb{R}_+ \times A_{2^j})},
\end{equation}
and the mixed norm
\begin{equation}
    \|F\|_{L^1 L^2 + LE^*} = \inf_{F = f_1 + f_2} \left(\|f_1\|_{L^1 L^2} + \|f_2\|_{LE^*}\right).
\end{equation}

We also define by $LE[t_0, t_1]$ the $LE$ norm restricted to the time interval $[t_0, t_1]$ and similarly for all the other norms.


We will be interested in establishing an integrated local energy estimate of the form
\begin{equation}\label{localenergyflat}
\|\partial u\|_{L^{\infty}_t L^2_x} + \| u\|_{LE^1}
 \lesssim \|\partial u(0)\|_{L^2} + \|Pu \|_{LE^*+L^1_t L^2_x}
\end{equation}
and a similar estimate involving the $LE^1[t_0, t_1]$ and $LE^*[t_0, t_1]$ norms.

The first estimate of this kind was obtained by Morawetz for the Klein-Gordon equation \cite{Mor68}. In the self-adjoint case $P=\Box_g$, there are many such results for asymptotically flat,
small perturbations of the Minkowski space-time (\cite{KSS02, KPV95, SmSo00, St05, Strauss75,
Al06, MS06, MT09}). Even for large perturbations, in the absence of trapping, \eqref{localenergyflat} still sometimes holds, see for instance \cite{BH10}, \cite{MST20}. In the presence of trapping, \eqref{localenergyflat} is known to fail, see \cite{Ral69}, \cite{Sb15}. However, a weaker form of \eqref{localenergyflat} may hold if the trapping is normally hyperbolic, if one allows for a loss of derivatives. This is the case for the Schwarzschild \cite{BS03,BS05,BSt06, MMTT10 ,DR09, DR07} and Kerr \cite{TT11, DR08, AB15, DRS16} black hole backgrounds in General Relativity.

\subsection{Asymptotically Flat Metrics}

To quantify how the metric $g$ approaches the Minkowski metric $m = \text{diag}(-1, 1, 1, 1)$ at spatial infinity, we introduce the asymptotically flat norm.

\begin{definition}[AF norm]
For a metric $g$ on $[0,T] \times \mathbb{R}^3$, and $0\leq\alpha,\beta\leq 3$, we define
\begin{equation}\label{eq:AF}
    \|g\|_{AF} = \sup_{0\leq\alpha,\beta\leq 3}\sum_{|\gamma| \leq 2} \|\langle x \rangle^{|\gamma|} \partial^\gamma g^{\alpha\beta}\|_{\ell_j^1 L^\infty([0,T] \times A_{2^j})}.
\end{equation}
More generally, for a cutoff region, we write $\|g\|_{AF_{>R_0}}$ to denote the norm restricted to $|x| > R_0$, and $\|g\|_{AF(A_{2^j})}$ for the norm restricted to the dyadic shell $A_{2^j}$.
\end{definition}

\begin{definition}[Asymptotically flat operator]
\label{def:asymptotically_flat}
We say that $P$ is \emph{asymptotically flat} if $\|g - m\|_{AF} < \infty$.
\end{definition}

In particular, we have the important bound
\begin{equation}\label{AFcons}
|\partial g^{\alpha\beta}(t, x)| \lesssim \la x\ra^{-1}.     
\end{equation}
\subsubsection{Quantitative Parameters}

For technical reasons, we need precise quantitative control on how the metric approaches the Minkowski metric. We introduce the following parameters that will be used throughout.

First, we fix a small constant $\mathbf{c} > 0$ and choose a radius $R_0 > 0$ sufficiently large such that
\begin{equation}\label{eq:R_0}
    \|g - m\|_{AF_{>R_0}} \leq \mathbf{c} \ll 1.
\end{equation}
This ensures that outside a ball of radius $R_0$, the metric $g$ is uniformly close to the Minkowski metric $m$.

Second, for each dyadic shell with $j > \log_2 R_0$, we define coefficients $\{c_j\}$ satisfying
\begin{equation}
    \|g - m\|_{AF(A_{2^j})} \lesssim c_j, \quad \text{and} \quad \sum_j c_j \lesssim \mathbf{c}.
\end{equation}
We may additionally assume that the sequence $\{c_j\}$ satisfies a slow variation condition:
\begin{equation}
    \frac{c_j}{c_k} \leq 2^{\delta |k-j|}, \quad \text{for some} \quad \delta \ll 1.
\end{equation}
This slow variation condition ensures that the rate of approach to the Minkowski metric does not oscillate too rapidly between adjacent dyadic scales, which is crucial for the propagation of estimates across different spatial regions.

These quantitative controls provide the foundation for treating $P$ as a small perturbation of the flat wave operator in the far field, enabling the use of perturbative techniques in our analysis.

We will take $R_0$ large enough so that the support of $a$ is contained in $\{|x|\leq R_0\}$.

Finally, we let $\tilde P=\Box_{\tilde g}$ be a small AF flat perturbation of $\Box_g$, in the sense that $\tilde P=\Box_g$ when $|x|\geq R_0$, and so that \eqref{localenergyflat} holds for $\tilde P$.

\subsection*{Timelike and Spacelike Foliation Assumptions}

Two crucial geometric assumptions in our analysis are the following:
\begin{enumerate}
    \item \emph{Uniformly timelike time-translation.} The coordinate vector field
    $\partial_t=\partial/\partial x^0$ is uniformly timelike, i.e. there exists $c_0>0$ such that
    \begin{equation}\label{eq:uniform_timelike_vector}
        g(\partial_t,\partial_t)=g_{00}(t,x)\le -c_0<0
        \qquad \text{for all } (t,x)\in \R_+\times\R^3 .
    \end{equation}

    \item \emph{Uniformly spacelike time-slices.} The hypersurfaces $\{t=\mathrm{const}\}$ are
    uniformly spacelike. Equivalently, the covector field $dt$ is uniformly timelike with respect
    to the dual metric $g^{-1}$, i.e. there exists $c_1>0$ such that
    \begin{equation}\label{eq:uniform_timelike_covector}
        g^{-1}(dt,dt)=g^{00}(t,x)\le -c_1<0
        \qquad \text{for all } (t,x)\in \R_+\times\R^3 .
    \end{equation}
\end{enumerate}
Under these assumptions, the spatial part of the metric induces a uniformly elliptic operator. Specifically, the operator $D_i g^{ij} D_j$ is uniformly elliptic in the sense that
\begin{equation}
    \label{eq:uniform_ellipticity}
    g^{ij}\xi_i \xi_j \approx |\xi|^2, \quad \text{for all } \xi \neq 0,
\end{equation}
where the implicit constants are uniform in space and time. This ellipticity is essential for energy estimates and provides control on spatial derivatives.

\subsection{Cutoff Functions}

To localize our analysis to different spatial regions, we introduce a family of smooth cutoff functions that will be used throughout the subsequent proofs.

Let $\chi \in C_c^\infty(\mathbb{R})$ be a non-increasing function satisfying
\begin{equation}
    \chi(x) = \begin{cases}
        1, & |x| \leq 1, \\
        0, & |x| > 2,
    \end{cases}
\end{equation}
with $0 \leq \chi \leq 1$ everywhere. For any radius $R > 0$, we define the scaled cutoff functions
\begin{align}
    \chi_{<R}(|x|) &:= \chi\left(\frac{|x|}{R}\right), \quad \text{(supported in $|x| \lesssim R$)} \\
    \chi_{>R}(|x|) &:= 1 - \chi_{<R}(|x|), \quad \text{(supported in $|x| \gtrsim R$)}.
\end{align}
These functions allow us to separate contributions from the interior region $|x| < R$ and the exterior region $|x| > R$.

Finally, we let $\chi_R \in C_c^\infty(\mathbb{R})$ denote a cutoff function satisfying $0 \leq \chi_R \leq 1$ with $\operatorname{supp} \chi_R \subset \{x : |x| \approx R\}$. This localizes to a neighborhood of the sphere of radius $R$ and will be particularly useful when we perform dyadic decompositions.

The derivatives of these cutoff functions satisfy the natural bounds
\begin{equation}
    |\partial^\alpha \chi_{<R}| + |\partial^\alpha \chi_{>R}| \lesssim_\alpha R^{-|\alpha|}, \quad |\partial^\alpha \chi_R| \lesssim_\alpha R^{-|\alpha|},
\end{equation}
which must be tracked carefully in commutator estimates.

\subsection{The Geometric Control Condition}

Our analysis rests crucially on the \emph{Geometric Control Condition (GCC)}, which we state precisely as:

\begin{assumption}[Geometric Control Condition]\label{ass:GCC}
Fix \(R_0\) from \eqref{eq:R_0}. Then there exists \(L>0\) and \(\delta>0\) such that for $w \in \Sigma$ and all $s_0 \in \R$:
if $x_{[0,L]}(w)\subset \{|x|\le R_0\}$, then
\[
\frac1L\int_{0}^{L} a\bigl(x_s(w)\bigr)\,ds \ge \delta.
\]
\end{assumption}

This is similar to the GCC of \cite{KleinhenzMcNulty2025energydecaytimedependent}.

This condition requires that
\begin{enumerate}
\item For any null-bicharacteristic and any time interval $I\subset \R$ of length $L$, if the trajectory stays in the interior region throughout that interval,
\[
x_I(w)\subset \{|x|\le R_0\},
\]
then it must spend accumulate enough damping in the interval.
\item Whenever a null-bicharacteristic remains in $\{|x|\le R_0\}$, the time gaps between visits to damping are uniformly bounded by $L$.
\end{enumerate}

The crucial piece is the \emph{uniformity} of the constants $L$ and $\delta$ over all null-bicharacteristics, in both forward and backward flow directions, and over all time windows. This uniform control prevents non-trapped null-bicharacteristics from staying in the interior region for arbitrarily long times without interacting with damping, and then leaving. This allows us to construct global escape functions with time-independent symbol bounds, leading to estimates with constants independent of $t$.

For the second part of our theorem, we will also need the following, which is the definition of $\epsilon-$ slowly varying from \cite{MST20}. 
\begin{assumption}[Slow variation in time]\label{asmp:sv}
There exists $0<\epsilon \ll 1$ such that
\begin{equation}\label{eq:sv}
  \|\partial_t g\|_{AF} < \epsilon.
\end{equation}
\end{assumption}

\subsection{The Main Result}

We are now ready to state the main result of the paper.

\begin{theorem}
\label{main}
\begin{enumerate}[label=(\roman*)]
\item
Let $P$ be an asymptotically flat damped wave operator satisfying \eqref{eq:uniform_timelike_vector}- \eqref{eq:uniform_ellipticity} and Assumption \ref{ass:GCC}. Then the following two point estimate holds:
\[
\|u\|_{LE^1[0, T]} + \|\partial u\|_{L^\infty[0, T]L^2} \lesssim E[u](0) + E[u](T) + \|u\|_{L^2[0, T]L^2(A_{\leq R_0})} + \|Pu\|_{LE^*[0, T] + L^1 L^2[0, T]}.
\]

\item 

If in addition we assume that Assumption~\ref{asmp:sv} holds, ILED and uniform energy bounds hold. More precisely 
\begin{equation}\label{ILED}
\|u\|_{LE^1[0, T]} + \|\partial u\|_{L^\infty[0, T]L^2} \lesssim E[u](0) +  \|Pu\|_{LE^*[0, T] + L^1 L^2[0, T]}.
\end{equation}

\end{enumerate}
All the implicit constants are independent of $T$. 
\end{theorem}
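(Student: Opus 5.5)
We split $u$ into a far-field part, where $P$ is a small perturbation of the flat wave operator, and a compact part, where GCC together with a positive commutator argument controls the high frequencies.

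\textbf{Exterior reduction for part (i).} Write $u = \chi_{<R_0} u + \chi_{>R_0} u$. Since $a$ is supported in $\{|x|\le R_0\}$ and $\tilde P = \Box_g$ for $|x|\ge R_0$, we have $\tilde P(\chi_{>R_0}u) = \chi_{>R_0}Pu + [\Box_g,\chi_{>R_0}]u$. The commutator is supported where $|x|\approx R_0$ and is first order. Applying \eqref{localenergyflat} for $\tilde P$ therefore bounds $\|\chi_{>R_0}u\|_{LE^1}$ by $E[u](0)$, $\|Pu\|_{LE^*+L^1L^2}$, and $\|\partial u\|_{L^2L^2(A_{\le 2R_0})} + \|u\|_{L^2L^2(A_{\le 2R_0})}$. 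This reduces part (i) to bounding the local energy $\|\partial u\|_{L^2[0,T]L^2(A_{\le 2R_0})}$ in a compact spatial region.

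\textbf{Interior estimate via GCC.} The compact region is handled by a positive commutator (escape function) argument in phase space. Using Assumption~\ref{ass:GCC}, we construct a time-independent-bounded symbol $q(t,x,\xi)$ of order $1$, homogeneous in the frequency $(\tau,\xi)$ and supported over $|x|\lesssim 2R_0$, satisfying
\[
H_p q + C a\, \tau^2 \gtrsim |\xi|^2 + \tau^2 \quad \text{on } \{p=0\}\cap\{|x|\le 2R_0\},
\]
modulo terms supported near $|x|\approx 2R_0$, which the exterior estimate absorbs. Here $p$ is the principal symbol of $\Box_g$.

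The construction proceeds in two steps. First take $q_0 = -\int_0^L \frac{s}{L}\, a(x_s(w))\,ds$ (averaging along the flow), or equivalently solve a transport equation. Then add the standard outgoing multiplier $\chi(x)\, x\cdot\xi$-type term to handle rays that exit $\{|x|\le R_0\}$. The uniformity of $L$ and $\delta$ is what makes the symbol bounds of $q$ independent of $t$.

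The damping term enters through $\Im\langle Pu, u\rangle$-type identities: the identity $\int a|\partial_t u|^2 = \Im\int Pu\,\overline{\partial_t u} + \text{boundary terms}$ controls $\|\sqrt a\,\partial_t u\|_{L^2}$. On $\{p=0\}$ we have $\tau^2\approx|\xi|^2$ by \eqref{eq:uniform_timelike_covector}, so this controls the full gradient microlocally where $a>0$.

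Quantizing $Q=\opw(q)$ and computing $2\Im\langle Pu, Qu\rangle_{[0,T]}$ gives the commutator $i[P,Q]$ plus boundary terms at $t=0,T$. These boundary terms are bounded by $E[u](0)+E[u](T)$, and this is the source of the two-point form of (i). Elliptic regions $p\neq 0$ are handled by a Gårding-type bound. Lower-order errors from the calculus are controlled by $\|u\|_{L^2L^2(A_{\le R_0})}$ after interpolation with a small multiple of the gradient norm. The energy $\|\partial u\|_{L^\infty L^2}$ then follows from the energy identity, since $a\ge0$.

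\textbf{Main obstacle.} I expect the main obstacle to be the construction of $q$ with $t$-uniform symbol bounds when the Hamilton flow is non-stationary. There are two difficulties. The first is handling rays that enter and exit the compact region repeatedly; I would first try gluing the damping average to a global exterior escape function via the outgoing property near $|x|=R_0$. The second is making the commutator remainders genuinely lower order uniformly in $T$.

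\textbf{Part (ii).} Assumption~\ref{asmp:sv} is used to remove $E[u](T)$ and $\|u\|_{L^2L^2(A_{\le R_0})}$ from the right side of (i).

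The energy term is handled as follows. Since the energy is nonincreasing up to errors, $\frac{d}{dt}E \le -\int a|\partial_t u|^2 + O(\epsilon)\|u\|_{LE^1}$-type terms plus source terms. This gives $E[u](T)\lesssim E[u](0)+\epsilon\|u\|_{LE^1}^2+\dots$, and the $\epsilon$ term is absorbed.

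For the low-frequency $L^2$ term, I would follow \cite{MST20}. Freeze the time variable on intervals of length $\sim\epsilon^{-1}$, and on each interval use the stationary zero-resolvent bound. This bound holds because damping, together with GCC, precludes real resonances and zero modes. Then sum using the slow variation with absorption.

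The constant is independent of $T$ because every step carries $T$-uniform bounds.
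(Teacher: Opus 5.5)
\textbf{Part (i): how the damping enters.} This step fails as written. You get the term $Ca\tau^2$ from the energy identity, i.e.\ by adding $C\tau$ (the $\partial_t$ multiplier) to $q$. That is free when $\partial_t$ is Killing, but here $H_p\tau=-\partial_t p\neq 0$ on $\{p=0\}$. Besides boundary terms, the identity for $\int_0^T\!\!\int a|\partial_t u|^2$ contains the bulk term $\frac12\int_0^T\!\!\int \partial_t g^{\alpha\beta}\,\partial_\alpha u\,\overline{\partial_\beta u}$ (up to sign). On $\{|x|\le R_0\}$ this is only bounded by $\|\partial_t g\|_{L^\infty}\|\partial u\|_{L^2L^2}^2$, and part (i) gives no smallness there. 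It is also multiplied by the same $C$ that has to beat the negative damping term in $H_pq$. Absorbing it would require $\|\partial_t g\|$ to be small compared with the GCC gain $\delta$, which is a slow-variation hypothesis. This bulk term, not only the boundary terms at $t=0,T$, is the real price of having no conserved energy.

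The paper never uses $\partial_t$ as a multiplier. Damping enters only through the anticommutator symbol $2\gamma\tau a q$, with $q=(\tau-b^+)(q_1^-+q_2^-)+(\tau-b^-)(q_1^++q_2^+)$ built sheet by sheet so that $\tau q>0$ on both sheets once the sheet symbols are exponentiated. Then $\sigma$ is taken large so that $\sigma H_{p^\pm}q_j^\pm$ dominates the non-stationary error $H_{p^\pm}(\tau-b^\mp)$, and finally $\gamma\gg\sigma$.

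Your $q_0$ also has the wrong weight: $H_pq_0=\frac1L\int_0^L a(x_s(w))\,ds-a(x_L(w))$. The negative term therefore sits at $\varphi_L(w)$ and cannot be offset pointwise by $Ca(w)$; the weight $L-s$ in Lemma~\ref{lem:semitrap} places it at $w$.

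\textbf{Part (i): interior/exterior gluing.} As written this is circular. $[\Box_g,\chi_{>R_0}]u$ has $LE^*$ norm comparable to the $LE^1$ norm of $u$ on $\{R_0\le|x|\le 2R_0\}$. So the $\tilde P$ estimate controls the exterior only through $\partial u$ on that annulus, with an $O(1)$ constant. Your interior estimate in turn loses $\partial u$ near $|x|\approx 2R_0$, so neither estimate absorbs the other.

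The paper breaks the loop with the exterior Morawetz estimate of \cite{MST20} (Prop.~3.2), which loses only $\|u\|_{L^2L^2(A_{\le R_0})}$. It then applies the interior estimate to $\eta_T\chi_{<R_0}u$ and pays for $[P,\chi_{<R_0}]u$ with $\|u_{\mathrm{out}}\|_{LE^1}$. The time cutoff $\eta_T$, whose errors are bounded by local energy near $t=0$ and $t=T$, also replaces your boundary terms at $t=0,T$. Those are not well defined for a spacetime $\Psi$DO whose symbol depends on $\tau$.

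\textbf{Part (ii).} Your frozen-time removal of $\|u\|_{L^2L^2(A_{\le R_0})}$ is not made uniform in $T$.
\begin{itemize}
\item On intervals of length $\epsilon^{-1}$ the frozen operator need not be a small perturbation, since $g(t)-g(t_k)$ can have size one in AF norm there.
\item More seriously, ``damping plus GCC precludes real resonances'' gives only qualitative resolvent bounds for each fixed stationary operator. Summing over $\sim\epsilon T$ intervals needs bounds uniform over the non-compact family of frozen operators.
\item Assumption~\ref{ass:GCC} concerns the time-dependent flow, not the frozen flows.
\item At $\tau=0$ the damping drops out, so it is not what excludes zero modes.
\end{itemize}
The paper does not freeze time. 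It splits $u=u_l+u_m+u_h$ in time frequency. It treats $u_h$ with the high-frequency estimate, where the $L^2$ term costs only $\tau_h^{-1}\|u_h\|_{LE^1}$. It treats $u_m$ and $u_l$ with the Carleman and low-frequency estimates of \cite{MST20}, whose constants depend only on the AF bounds. Assumption~\ref{asmp:sv} is used only to make the commutators $[P,Q_i]$ of size $O(\epsilon)$ and to bound $E[u](T)$ as you describe. Initial data are then handled by a parametrix matching $u[0]$ and $u[T]$.
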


Our theorem extends the result of Kofroth \cite{kof23} to the nonstationary case. In the stationary case, an elementary compactness argument ensures that Assumption~\ref{ass:GCC} is equivalent to all trapped null geodesics intersecting the damped set, which was the assumption used in \cite{kof23}. In the nonstationary case, the latter assumption is not quite enough, as it is possible for geodesics to linger in a compact region for an arbitrary length of time. Another issue we need to contend with is the lack of a conserved coercive energy, which is why we obtain only a two point estimate in part (i) of Theorem~\ref{main}.

\section{High Frequency Analysis}
\label{chap:high_freq}
The high-frequency regime constitutes the central technical challenge in establishing local energy decay for damped wave operators on non-stationary space-times. This difficulty stems from the phenomenon of trapping. In contrast to the low- and medium-frequency regimes, where energy either disperses to infinity or can be controlled by Carleman-type estimates with large parameters, high-frequency waves concentrate along null-bicharacteristics and can exhibit persistent localized energy when these trajectories avoid damped regions.

The fundamental obstacle is that trapped null-bicharacteristics may fail to interact sufficiently
with the damping. In the stationary setting studied in \cite{kof23}, the temporal and spatial variables can be decoupled, allowing one to work on $T^*\R^3$. Additionally, a compactness argument provides a uniform escape time for non-trapped null-bicharacteristics. However, in our non-stationary setting, the time-dependence of the phase space structure necessitates working on $T^*\mathbb{R}^4$ rather than $T^*\mathbb{R}^3$, and loss of compactness in the temporal direction requires control for non-trapped null-bicharacteristics that linger in a compact set.

\subsection{Rescaling}

It will be convenient to have a large constant $\gamma > 0$ in front of the damping term. Following \cite{kof23} and \cite{KleinhenzMcNulty2025energydecaytimedependent}, we rescale the problem to
\begin{equation}\label{resc}
\tilde P \tilde u = \tilde F    
\end{equation}
where 
\[
\tilde P = D_\alpha \tilde g^{\alpha\beta}D_\beta + i\gamma \tilde a D_t
\]
and
\[\begin{split}
\tilde u(t, x) = \gamma^{-2} u(\gamma t, \gamma x) \\
\tilde F(t, x) = F(\gamma t, \gamma x) \\
\tilde g^{\alpha\beta}(t, x) = g^{\alpha\beta}(\gamma t, \gamma x) \\
\tilde a(x) = a(\gamma x) \\
\tilde R_0 = \gamma R_0
\end{split}\]

It is important to notice that Assumption \ref{ass:GCC} still holds under rescaling, with the same constants $L$ and $\delta$. Moreover, we also have, due to \eqref{AFcons} that
\begin{equation}\label{resc}
|\pa_x \tilde g^{\alpha\beta}(t, x)| = |\gamma (\pa_x g^{\alpha\beta})(\gamma t, \gamma x)| \lesssim \|\pa_x g^{\alpha\beta}(t, x)\|_{L^\infty}
\end{equation}
with a constant that is independent of $\gamma$. 
We will from now on consider \eqref{resc}, and drop the $\tilde{}$ for convenience. 

\subsection{The Positive Commutator Method}

Our proof of the main high-frequency estimate follows the positive commutator method. The key idea is to construct a pseudodifferential operator $Q = q^w(t,x,\tau,\xi)$ satisfying
\[
\mathrm{Re}\langle i[P,Q]u, u\rangle + 2\mathrm{Re}\langle \gamma a Q u, u\rangle + \|\langle x \rangle^{-2} u\|_{LE}^2 \gtrsim \|u\|_{LE^1}^2
\]
where $\gamma > 0$ is large enough, and $a(x)$ is the damping. At the symbol level, this translates to the following:

\begin{lemma}
\label{mainlemma}
 There exist symbols $q \in S_{hom}^1(T^*\mathbb{R}^4)$ and $m \in S_{hom}^0(T^*\mathbb{R}^4)$ so that
\[
H_p q + 2\gamma \tau a(x) q + p m \gtrsim  \langle x \rangle^{-2}(\tau^2 + |\xi|^2),
\]
where the implicit constant is independent of time $t$.
\end{lemma}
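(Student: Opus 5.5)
The plan is to construct $q$ as a sum of an exterior multiplier and an interior escape/damping function, and to use $m$ to absorb terms off the characteristic set $\{p=0\}$. Here $p=g^{\alpha\beta}\xi_\alpha\xi_\beta$ with $\xi_0=\tau$.

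\emph{Step 1: reduce to the characteristic set.} Away from $p=0$, the statement is elliptic. Indeed, by \eqref{eq:uniform_timelike_covector} and \eqref{eq:uniform_ellipticity}, on $\{|p|\geq c(\tau^2+|\xi|^2)\}$ we may set $m = C\langle x\rangle^{-2}p/(\tau^2+|\xi|^2)$, which makes $pm\gtrsim \langle x\rangle^{-2}(\tau^2+|\xi|^2)$ there. In a conic neighborhood of $p=0$ we instead need $H_pq+2\gamma\tau a q\gtrsim \langle x\rangle^{-2}(\tau^2+|\xi|^2)$. This allows an error of the form $p\cdot S^0_{hom}$, which is again absorbed into $m$. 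Since $g^{00}<0$, on $p=0$ we have $|\tau|\approx|\xi|$, and the two components $\tau>0$ and $\tau<0$ can be handled symmetrically: the sign of $\tau q$ determines whether the damping helps.

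\emph{Step 2: the exterior region $|x|\gtrsim R_0$.} Here we use the standard Morawetz-type multiplier for small AF perturbations of Minkowski, as in \cite{MST20} and \cite{MT09}. We take $q_{ext}=\phi(|x|)\,\frac{x}{|x|}\cdot\xi$ with $\phi$ increasing, bounded, and $\phi'\approx \langle x\rangle^{-1}$ weighted by the slowly varying $c_j$. Then $H_{p}q_{ext}\gtrsim \langle x\rangle^{-2}(\tau^2+|\xi|^2)$ for $|x|>R_0$, modulo $p\cdot S^0$ and errors of size $\mathbf c$ and $c_j$ that are absorbed. The boundedness of $\phi$ keeps $q_{ext}\in S^1_{hom}$ with $t$-independent bounds. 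Time derivatives of $g$ only enter through $H_p$ acting on $x,\xi$; for $q$ independent of $t$ and $\tau$, $\partial_t g$ does not appear at all.

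\emph{Step 3: the interior region, the main obstacle.} Fix a cutoff $\chi_{<R_0}$. On the characteristic set over $|x|\lesssim R_0$ we need a function that increases along the flow except where damping acts. Following \cite{kof23} and \cite{KleinhenzMcNulty2025energydecaytimedependent}, I would define the averaged damping
\[
b(w)=\frac1L\int_0^L \chi(s)\, a(x_s(w))\,ds ,
\]
and build $q_{int}=\tau\, e^{\psi}$-type weights with $\psi(w)=-\int_0^{L}\frac{s}{L}\,a(x_s(w))\,ds$, or equivalently a time-averaged Lyapunov function. A direct computation gives $H_p\psi = a(x)-b$ up to endpoint terms. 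Assumption~\ref{ass:GCC} then guarantees $b\geq\delta$ for trajectories remaining in $\{|x|\le R_0\}$ for time $L$. Trajectories that leave within time $L$ are picked up by $q_{ext}$ after flowing, so we use a transported version of $q_{ext}$, $\int_0^L q_{ext}(w_s)\,ds$. With $q = q_{ext}+ M\, q_{int}$, we get $2\gamma\tau a q_{int}$ dominating the positive $a$-term. The uniformity of $L$ and $\delta$ over all $w$ and all starting times is exactly what makes all symbol bounds uniform in $t$.

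The main difficulty is showing that the flow-averaged constructions lie in $S^1_{hom}$ with bounds independent of $t$, which means controlling derivatives of the bicharacteristic flow over time $L$. This is where the rescaled bound \eqref{resc} on $\partial g$ and the AF bounds on second derivatives come in, since they give uniform bounds on the flow map's derivatives. Finally, $\gamma$ is chosen large to beat the commutator errors from gluing via $\chi_{<R_0}$ and from $\partial_t g$ terms inside $|x|\le 2R_0$.
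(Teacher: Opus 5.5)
Your overall architecture (a Lagrangian correction off $\Sigma$, an exterior Morawetz multiplier, a GCC-based flow average of $a$ in the interior) matches the paper's. However, the interior step does not close as written, because you ask $\gamma$ to do something it cannot do. The parameter $\gamma$ enters only through $2\gamma\tau a q$, so it gives nothing where $a=0$. Yet you use it to absorb two kinds of error. The first is the gluing errors from $\chi_{<R_0}$, which live in $R_0\lesssim|x|\lesssim 2R_0$, where $a\equiv 0$. The second is the non-stationary error in
\[
H_p(\tau e^{\psi})=e^{\psi}\bigl(H_p\tau+\tau H_p\psi\bigr),\qquad H_p\tau=-\partial_t p .
\]
This error is of size $|\partial_t g|(\tau^2+|\xi|^2)$, has no sign, and is not small, since no slow variation is assumed for this lemma. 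On the GCC set your gain $\tau H_p\psi$ is only of size $\delta(\tau^2+|\xi|^2)$, so wherever $a$ is small the error can dominate it. The missing idea is an amplification of the escape function itself. The paper takes $(\tau-b^\mp)e^{\sigma q^\pm}$ on each sheet, so that differentiating the weight produces $\sigma(b^+-b^-)^2e^{\sigma q^\pm}H_{p^\pm}q^\pm$. For $\sigma$ large this beats the $O(1)$ term $H_{p^\pm}(\tau-b^\mp)$, which is where $\partial_t b^\pm-\partial_t b^\mp$ sits. Only then is $\gamma\gg\sigma$ chosen, and only to pay for the $-\sigma C a$ part of $\sigma H_{p^\pm}q^\pm$ on $\operatorname{supp} a$. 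The gluing errors are handled not by $\gamma$ but by a small $\epsilon$ in front of the interior piece, together with positivity of $H_p q_{ext}$ on the annulus. Once $\sigma$ is inserted, your $\tau e^{\sigma\psi}$, with $\psi$ adapted to each sheet, would be a workable variant of the paper's form.

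Two further points concern making the damping help rather than hurt. First, the identity you state is not what your $\psi$ gives. With the weight $s/L$ one actually gets $H_p\psi=\frac1L\int_0^L a(x_s(w))\,ds-a(x_L(w))$. The negative boundary term sits at the far end of the segment, where the damping term evaluated at $w$ cannot reach it. You need the weight $L-s$, as in the paper's $q^\pm=\int_0^L(L-s)(\alpha^\pm_s-\alpha^\pm_{-s})\,ds$. That choice gives $H_{p^\pm}q^\pm=\int_0^L a(x^\pm_s)\,ds+\int_0^L a(x^\pm_{-s})\,ds-2La(x)$, whose only negative term is at the current point. Second, every piece of $q$ that is nonzero on $\operatorname{supp} a$ must satisfy $\tau q\ge 0$ on $\Sigma$; otherwise increasing $\gamma$ makes things worse. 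Your transported term $\int_0^L q_{ext}(w_s)\,ds$ is nonzero on $\operatorname{supp} a$. It is positive for trajectories escaping in the flow parameter on both sheets, so $\tau$ times it is negative on one sheet. The paper's form $(\tau-b^\mp)\times(\text{positive weight})$ avoids this, since on $\tau=b^\pm$ it gives $\tau q=b^\pm(b^\pm-b^\mp)\times(\text{positive})>0$.
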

The proof proceeds through several steps:
\begin{description}
  \item[Section \ref{sec:hamilton_flow}] We establish fundamental properties of the bicharacteristic flow $\varphi_s : T^*\mathbb{R}^4 \setminus \{0\} \to T^*\mathbb{R}^4 \setminus \{0\}$ associated to the principal symbol $p(t,x,\tau,\xi) = \tau^2 - 2\tau g^{0j}\xi_j - g^{ij}\xi_i\xi_j$. The time-dependence of $g$ requires careful attention to uniformity in $t$

\item[Section \ref{sec:semi_trapped}]  This constitutes the core innovation. For $L$-trapped null-bicharacteristics a symbol is constructed that accumulates damping seen either forward or backward in flow time. It is crucial that the constants in this estimate be uniform, and it is here that we use the GCC.

\item[Section \ref{sec:non_trapped}] For null-bicharacteristics escaping to spatial infinity, we follow the method in \cite{kof23}, but adopt it to the non-stationary setting. In particular, in this region null-bicharacteristics exit $B(0,2R_0)$ within flow time $L$. We construct the symbol that measures how long the null-bicharacteristic will remain inside this ball. Finally, to handle error terms created we add an exterior correction coming from a modified Morawetz multiplier.
\item[Section \ref{pf:mainlemma}] After constructing the symbols on the $L$-trapped and interior non-trapped sets, we combine these constructions on each light-cone, providing the desired positivity on the characteristic set. To obtain positivity on the elliptic set we construct a Langrangian correction. Our proof deviates from that of \cite{MST20, kof23, KleinhenzMcNulty2025energydecaytimedependent} and does not rely on the symbol $q$ being affine in $\tau$.


\end{description}

\subsection{Results on the Hamilton Flow}\label{sec:hamilton_flow}
From the assumption that $\pd_t$ is uniformly time-like, the signature of the metric, and the
co-factor expansion, we have that $g^{00} \lesssim -1$. Using this we divide the operator by
$g^{00}$, so that, after doing so, $g^{00} = 1$. Such an operation preserves the assumptions placed
on the coefficients of operator. Moving forward, we assume  without loss of generality, that $g^{00}
= 1$ and, since $g^{00}$ was initially negative, we can denote $g^{ij}/g^{00}$ by $-g^{ij}$. This
produces an operator having the symbol
\begin{equation}
    \sigma(P)(t, x, \tau, \xi)=\tau^2-2 \tau g^{0 j}(t,x) \xi_j-g^{i j}(t,x) \xi_i \xi_j .
\end{equation}

Furthermore, denote the principal symbol of symmetric and skew-symmetric parts of $P$ by
\begin{align*}
    p(t,x,\tau, \xi) &= \tau^2 - 2 \tau g^{0 j}(t,x) \xi_j - g^{ij}(t,x) \xi_i \xi_j \\
    s_{skew}(x,\tau,\xi) &= i \gamma \tau a(x). 
\end{align*}
Factoring the principal symbol of the symmetric part gives $$p(t,x,\tau,\xi) = p^+(t,x,\tau,\xi)p^-(t, x, \tau,\xi),$$
where $p^{\pm}(t,x,\tau,\xi) = \tau - b^\pm(t,x,\xi)$. It can be seen that
\begin{equation}
    b^\pm(t,x,\xi) = g^{0j} \xi_j \pm \sqrt{(g^{0j}(t,x)\xi_j)^2 + g^{ij} \xi_i \xi_j}.
\end{equation}
Moreover for fixed $\eta > 0$, 
\begin{align*}
    b^\pm(t,x,\eta \xi) = g^{0j} \eta \xi_j \pm \sqrt{(g^{0j}(t,x) \eta \xi_j)^2 + g^{ij} \eta^2
    \xi_i \xi_j} = \eta b^{\pm}(t,x,\xi).
\end{align*}
Thus, $b^\pm$ is homogeneous of degree 1. 

The symbol $p$ generates a Hamilton flow, $\varphi_s(w) \in \R \cross T^*\R^4$, given by
\begin{equation*}
    \begin{cases}
        \dot{t}_s = \pd_\tau p(\varphi_s(w)), & \dot{\tau}_s = -\pd_t p(\varphi_s(w)) \\
      (\dot{x}_s)_k = \pd_{\xi_k} p(\varphi_s(w)), & \dot{\xi}_s = \pd_{x_k} p(\varphi_s(w))    \end{cases}
\end{equation*}
which for our operator, gives
\begin{equation*}
    \begin{cases}
        \dot{t}_s = 2 \tau_s - 2 g^{0j}(t_s, x_s) [\xi_s]_j, & \dot{\tau}_s = -2\tau_s \pd_{t} g^{0j}(t_s, x_s)[\xi_s]_j - \pd_{t} g^{ij} (t_s, x_s) [\xi_s]_i [\xi_s]_j \\
        (\dot{x}_s)_k = -2 \tau_s g^{0k}(t_s, x_s) - 2 g^{kj}(t_s, x_s) [\xi_s]_j, & (\dot{\xi}_s)_k = -2\tau_s \pd_{x_k} g^{0j}(t_s, x_s)[\xi_s]_j - \pd_{x_k} g^{ij} (t_s, x_s) [\xi_s]_i [\xi_s]_j.
    \end{cases}
\end{equation*}
Additionally, we denote the flow generated by $p^\pm$ by $\varphi_s^\pm(w) = (t_s^\pm(w), x_s^\pm(w), \tau_s^\pm(w), \xi_s^\pm(w))$. This is a solution to the system
        \begin{align}\label{propsys2}
        \left\{\begin{aligned}
            \dot{t}^+_s &= 1 \\
            \dot{\tau}^+_s &=  \pd_t b^+(\varphi_s(w)) \\
            (\dot{x}_s)^+_k &= p^+_{\xi_k}(\varphi_s(w)) \\
            (\dot{\xi}^+_s)_k &= -p_{x_k}^+(\varphi_s(w))  \\
            (t^+_0, \tau^+_0, x^+_0, \xi^+_0) &= w
        \end{aligned}
        \right.
    \end{align}

We now state a useful proposition that allows us to reduce the construction to the cosphere bundle.
\begin{proposition}\label{rof:homogeneity}
    The Hamilton vector field $H_{p^\pm}$ is homogeneous of degree $0$. That is, denoting dilation in the fiber by $M_\lambda(t, x,\tau, \xi) = (t, x,\lambda \tau, \lambda \xi)$, $\lambda > 0$, then
    \begin{equation}
        M_\lambda^*(H_{p^\pm}|_{(t_0, x_0,\tau_0, \xi_0)}) q = H_{p^\pm}|_{(t_0, x_0, \tau_0, \xi_0)} q.
    \end{equation}
\end{proposition}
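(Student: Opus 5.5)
The plan is a direct computation in coordinates, using only that $b^\pm$ is homogeneous of degree $1$ in $\xi$, which was shown above. Write
\[
H_{p^\pm} = \partial_\tau p^\pm\,\partial_t + \partial_{\xi_k} p^\pm\,\partial_{x_k} - \partial_t p^\pm\,\partial_\tau - \partial_{x_k} p^\pm\,\partial_{\xi_k}.
\]
Since $p^\pm = \tau - b^\pm(t,x,\xi)$, we have $\partial_\tau p^\pm = 1$ and $\partial_{\xi_k} p^\pm = -\partial_{\xi_k} b^\pm$; both are homogeneous of degree $0$ in $(\tau,\xi)$, because differentiating a degree-one homogeneous function in the fiber lowers the degree by one. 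Likewise $\partial_t p^\pm = -\partial_t b^\pm$ and $\partial_{x_k} p^\pm = -\partial_{x_k} b^\pm$ are homogeneous of degree $1$, since $t,x$-derivatives do not change fiber homogeneity. The first key step is therefore to record these scaling identities:
\[
(\partial_{\xi}p^\pm)\circ M_\lambda = \partial_\xi p^\pm, \qquad (\partial_{t,x}p^\pm)\circ M_\lambda = \lambda\,\partial_{t,x}p^\pm.
\]

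The second step is to check how the coordinate vector fields transform under the pushforward by the dilation $M_\lambda$. The position directions $\partial_t,\partial_{x_k}$ are unchanged, while the fiber directions scale: $(M_\lambda)_*\partial_\tau = \lambda\,\partial_\tau$ and $(M_\lambda)_*\partial_{\xi_k} = \lambda\,\partial_{\xi_k}$. Equivalently, for a function $q$ one has $\partial_\tau(q\circ M_\lambda) = \lambda (\partial_\tau q)\circ M_\lambda$, and similarly for $\partial_{\xi_k}$.

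The final step combines these. In the position components of $H_{p^\pm}$, degree-$0$ coefficients multiply unchanged position directions. In the fiber components, degree-$1$ coefficients multiply directions that scale like $\lambda^{\pm1}$, and the factors of $\lambda$ cancel. This shows that $H_{p^\pm}$ is invariant under $M_\lambda$: $(M_\lambda)_*H_{p^\pm}|_{w} = H_{p^\pm}|_{M_\lambda w}$ up to the fiber rescaling convention. Hence $H_{p^\pm}$ is homogeneous of degree $0$ in the sense that $H_{p^\pm}(q\circ M_\lambda) = (H_{p^\pm}q)\circ M_\lambda$. This is the statement, interpreting $M_\lambda^*$ as the pullback action on the vector field. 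As a consequence, the flow $\varphi^\pm_s$ commutes with the dilations and descends to the cosphere bundle, with $t^\pm_s$ and $x^\pm_s$ independent of $\lambda$.

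The only real obstacle is bookkeeping rather than analysis. One has to settle the precise meaning of $M_\lambda^*$ in the statement (pullback versus pushforward), so that the powers of $\lambda$ match. I would fix the convention that $M_\lambda^*$ acts by $q\mapsto q\circ M_\lambda$ and verify the identity $H_{p^\pm}(q\circ M_\lambda)=(H_{p^\pm}q)\circ M_\lambda$ term by term. Smoothness of $b^\pm$ away from $\xi=0$ is needed for the derivatives to make sense. It holds because the radicand $(g^{0j}\xi_j)^2+g^{ij}\xi_i\xi_j \gtrsim |\xi|^2$ by \eqref{eq:uniform_ellipticity}, so we work on $T^*\R^4$ with $\xi \neq 0$.
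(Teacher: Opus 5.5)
Your proposal is correct and follows the same route as the paper: a direct coordinate computation using that $\partial_{\tau,\xi}p^\pm$ are homogeneous of degree $0$ and $\partial_{t,x}p^\pm$ are homogeneous of degree $1$, combined with the chain rule for $q\circ M_\lambda^{\pm1}$. Your identity $H_{p^\pm}(q\circ M_\lambda)=(H_{p^\pm}q)\circ M_\lambda$ is equivalent to the paper's pullback formulation, which uses $M_\lambda^{-1}$; you also track the $\partial_t,\partial_\tau$ components, which the paper's displayed computation leaves implicit.
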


\begin{proof}
  We note that  $p^\pm \in S^1_{hom}(T^*\R^4)$, while $\pd_{x_j}p^\pm \in S^1_{hom}(T^*\R^4)$ and $\pd_{\xi_j} \in S^0_{hom}(T^*\R^4)$. Let $q \in C^\infty(T^*\R^4)$. By direct computation we have
    \begin{align*}
        M^*_\lambda (H_{p^\pm}|_{(t_0,x_0,\tau_0,\xi_0)}) q &= H_{p^\pm}|_{(t_0, x_0, \lambda \tau_0, \lambda \xi_0)} (q \circ M^{-1}_\lambda) \\
        &= (\nabla_\xi p^\pm)(t_0, x_0, \lambda\tau_0, \lambda\xi_0) \cdot (\nabla_x (q \circ M^{-1}_\lambda))(t_0, x_0, \lambda\tau_0, \lambda\xi_0) \\
        &\quad - (\nabla_x p^\pm)(t_0, x_0, \lambda\tau_0, \lambda\xi_0) \cdot (\nabla_\xi (q \circ M^{-1}_\lambda))(t_0, x_0, \lambda\tau_0, \lambda\xi_0) \\
        &= (\nabla_\xi p^\pm)(t_0, x_0, \tau_0, \xi_0) \cdot (\nabla_x q)(t_0, x_0, \tau_0, \xi_0) \\
        &\quad - \lambda (\nabla_x p^\pm)(t_0, x_0, \tau_0, \xi_0) \cdot \lambda^{-1}(\nabla_\xi q)(t_0, x_0, \tau_0, \xi_0)\\
        &= H_{p^\pm}|_{(t_0,x_0, \xi_0, \tau_0)}.
    \end{align*} \end{proof}

Fix any smooth positive ``radius'' \(r=r(\tau,\xi)\) which is homogeneous of degree \(1\) in \((\tau,\xi)\) (e.g.
\(r(\tau,\xi)=\sqrt{\tau^2+|\xi|^2}\)), and set
\[
S^*\R^4:=\{(t,x,\tau,\xi): r(\tau,\xi)=1\}.
\]

Define the projection to the chosen cosphere section by
\[
\pi(\rho):=M_{1/r(\rho)}(\rho),\qquad \rho\in T^*(\R^{4})\setminus o.
\]
Since \(\pi\circ M_\lambda=\pi\), the differential \(d\pi\) annihilates the vector field
\(R:=\tau\partial_\tau+\xi\cdot\partial_\xi\).
The \emph{descended} (or induced) Hamilton vector field on \(S^* \R^4\) is then defined by
\[
\widehat H_{p^\pm}:=d\pi(H_{p^\pm}),
\]
and can be represented on the section \(S^*\) by subtracting the radial component
\[
\widehat H_{p^\pm}
= H_{p^\pm} - \frac{H_{p^\pm}r}{Rr}\,R
\qquad \text{on }S^*\R^4.
\]
Since \(r\) is degree \(1\), \(Rr=r\), hence on \(S^*\) this simplifies to
\[
\widehat H_{p^\pm}=H_{p^\pm}-(H_{p^\pm}r)\,R.
\]
In particular, \(\widehat H_{p^\pm}\) is tangent to \(S^*\) (indeed \(\widehat H_{p^\pm}r=0\) on \(S^*\)).

Finally, if \(q_1\in C^\infty(S^*)\) and we extend it to \(T^*\R^{4}\setminus 0\) by the \(0\)-homogeneous extension
\(q:=q_1\circ\pi\), then the chain rule yields
\[
H_{p^\pm}q = (\widehat H_{p^\pm}q_1)\circ \pi.
\]
Consequently, any pointwise inequality proved on the cosphere bundle for \(\widehat H_{p^\pm}q_1\) lifts to the corresponding inequality for \(H_{p^\pm}q\) on
\(T^*\R^{4}\setminus o\) after homogeneous extension.

\begin{remark}
    The radial function used in \cite{kof23}, \cite{KleinhenzMcNulty2025energydecaytimedependent} is $b(x,\xi)$ coming from the the half-wave decomposition. Due to $g$ being stationary, $b$ is a constant of motion, $H_p b = 0$. Consequently, $H_p$ descends to $\{b = 1\}$ as $\widehat{H}_p = H_p$.
\end{remark}

Fix $R>0$ and $L>0$. To capture bicharacteristics which may be non-trapped but can remain in
$\{|x|\le R_0\}$ for a long time, we introduce  \emph{$L$-trapped sets}.
Define the forward and backward \emph{exit times} from the ball $\{|x|\le R_0\}$ by
\[
T_{R_0}^{f,\pm}(w):=\inf\{s\ge 0:\ |x^\pm_s(w)|> R_0\},\qquad
T_{R_0}^{b,\pm}(w):=\inf\{s\ge 0:\ |x^\pm_{-s}(w)|> R_0\},
\]
with the convention $\inf\emptyset=\infty$. We then set
\begin{align}\label{eq:L_almost_trapped_sets}
\A_{L}^{f,\pm}&:=\{w\in \Sigma^\pm:\ |x^\pm(w)|\le R_0,\ T_{R_0}^{f,\pm}(w)\ge L\}\\
\A_{L}^{b,\pm}&:=\{w\in \Sigma^\pm:\ |x^\pm(w)|\le R_0,\ T_{R_0}^{b,\pm}(w)\ge L\}\\
\A_{L}^\pm &:= \A_{L}^{f, \pm} \cup \A_{L}^{b,\pm}.
\end{align}
Thus $\A_{L}^{\pm}$ consists of characteristic covectors whose forward/backward bicharacteristic
remains in $\{|x|\le R_0\}$ for at least time $L$. On the complement
$\Sigma^\pm \cap\{|x|\le {R_0}\}\setminus \A_{L}^{\pm}$ one has a uniform exit bound.

 The construction of an escape function will be done using the half-wave decomposition. We have the following relationship between null-bicharacteristics of $p$ and those of $p^\pm$.
\begin{proposition}
\label{prop:bichar_partition}
    Every null-bicharacteristic for the flow generated by $p$ is a null-bicharacteristic for the flow generated by either $p^+$ or $p^-$, and vice versa. Moreover, Assumption~\ref{ass:GCC} also holds for $p^+$ and $p^-$, with perhaps different constants $L$ and $\delta$.
\end{proposition}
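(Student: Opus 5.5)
The plan is to use the factorization $p = p^+p^-$ and the fact that on $\Sigma^\pm := \{p^\pm = 0\}$ the other factor does not vanish away from the zero section. We have
\[
p^+ - p^- = b^- - b^+ = -2\sqrt{(g^{0j}\xi_j)^2 + g^{ij}\xi_i\xi_j}.
\]
By \eqref{eq:uniform_ellipticity} this is $\approx -|\xi|$, and $|\xi|\approx|(\tau,\xi)|$ on $\Sigma$, because on $\Sigma$ we have $\tau = b^\pm$ with $|b^\pm|\lesssim |\xi|$. Hence $\Sigma = \Sigma^+ \sqcup \Sigma^-$ is a disjoint union of closed conic sets. This uses the uniform ellipticity bounds, which hold uniformly in $t$.

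On $\Sigma^+$ we compute, by the Leibniz rule,
\[
H_p = p^- H_{p^+} + p^+ H_{p^-} = p^- H_{p^+}.
\]
So the two vector fields on $\Sigma^+$ are parallel, with proportionality factor $p^-|_{\Sigma^+} = b^+ - b^- \approx |\xi|$. This factor is positive, smooth, and bounded above and below on the cosphere bundle.

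Given a null bicharacteristic $\varphi_s(w)$ of $p$ with $w\in\Sigma^+$, the curve stays in $\Sigma^+$, since $p$ is conserved and $\Sigma^\pm$ are separated. I would reparametrize it by $\sigma(s) = \int_0^s p^-(\varphi_{s'}(w))\,ds'$. A direct check shows that $\varphi^+_{\sigma(s)}(w) = \varphi_s(w)$: both curves solve the ODE $\dot\rho = p^-(\rho) H_{p^+}(\rho)$, and uniqueness applies. Conversely, a $p^+$-bicharacteristic starting on $\Sigma^+$ stays on $\Sigma^+$ because $H_{p^+}p^+ = 0$. It is then a reparametrized $p$-bicharacteristic via the inverse map $s(\sigma)$. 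The case $\Sigma^-$ is identical, except that $p^+|_{\Sigma^-} = b^- - b^+ < 0$. That reverses orientation, which is harmless because Assumption~\ref{ass:GCC} is symmetric in forward and backward time windows.

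For the GCC transfer, suppose a $p^+$-trajectory satisfies $x^+_{[0,L']}(w)\subset\{|x|\le R_0\}$. In $p$-time this corresponds to an interval of length $s(L') \ge L'/C$, where $C$ bounds $p^-$ on $\Sigma^+\cap S^*\R^4$. I would take $L' = CL$ so that this interval has length at least $L$. Splitting it into windows of length $L$, each one starting at a point of the same trajectory, Assumption~\ref{ass:GCC} gives at least $\delta L$ of damping integral per window. Changing variables, $d\sigma = p^-\,ds$ with $p^- \ge c$, we obtain
\[
\frac{1}{L'}\int_0^{L'} a(x^+_\sigma)\,d\sigma \ge \frac{c}{L'}\cdot \delta L \lfloor s(L')/L\rfloor \gtrsim \delta'.
\]
Here $\delta'$ depends only on $c$, $C$, $\delta$ and $L$.

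The main obstacle is uniformity. The bounds $c \le p^\mp \le C$ must hold along the entire flow, uniformly in $t$. However, $|(\tau_s,\xi_s)|$ is not conserved in the non-stationary case, so one cannot simply normalize once. I would handle this by homogeneity. Since $H_{p^\pm}$ is homogeneous of degree $0$ (Proposition~\ref{rof:homogeneity}), the spatial projection $x^\pm_\sigma$ is invariant under fiber dilation. Both trajectories and the GCC condition therefore depend only on the projected point in $S^*\R^4$. The ratio $p^-/r$ is homogeneous of degree $0$ and is bounded above and below uniformly in $(t,x)$ by \eqref{eq:uniform_ellipticity} and the AF bound on $g$. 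To make the time change uniform, I would use the $p$-parametrization rescaled by $r$: replace $H_p$ by $r^{-1}H_p$ on $\Sigma$, which has the same trajectories. I expect this bookkeeping of the normalization, rather than the algebra above, to require the most care.
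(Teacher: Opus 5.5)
Your proposal is correct and follows essentially the same route as the paper. Both factor $p=p^+p^-$, observe that $H_p=p^\mp H_{p^\pm}$ on $\Sigma^\pm$ with a nonvanishing factor, reparametrize the bicharacteristics, and transfer the GCC by a change of variables in the flow parameter. Your extra normalization, working with $r^{-1}H_p$ so that the time-change factor $p^\mp/r$ is homogeneous of degree $0$ and uniformly bounded above and below, sharpens the paper's bare assertion that $c\approx 1$ globally; that assertion tacitly needs exactly such a normalization, since $p^\mp|_{\Sigma^\pm}$ is homogeneous of degree $1$ and $|(\tau_s,\xi_s)|$ is not conserved along the non-stationary flow.
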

\begin{proof}
Let
\[
\Sigma^\pm := \{p^\pm = 0\}, \qquad \Sigma :=\Sigma^+\cup\Sigma^-.
\]
Since \(p=p^+p^-\), every point of \(\Sigma\) lies on exactly one of the two sheets
\(\Sigma^\pm\).
Fix one sign and work on \(\Sigma^\pm\). For any smooth function \(f\),
\[
H_p f = \{p,f\}=\{p^+p^-,f\}=p^- H_{p^+}f + p^+ H_{p^-}f.
\]
Restricting to \(\Sigma^\pm\), where \(p^\pm=0\), gives
\[
(H_p f)|_{\Sigma^\pm} = p^\mp H_{p^\pm}f.
\]
Hence, as vector fields along \(\Sigma^\pm\),
\[
H_p = p^\mp H_{p^\pm}.
\]
Now on \(\Sigma^\pm\),
\[
p^\mp = (\tau-b^\mp)|_{\tau=b^\pm}=b^\pm-b^\mp,
\]
which is nowhere zero since \(b^+>0>b^-\). Thus \(H_p\) and \(H_{p^\pm}\) differ by a
nowhere-vanishing scalar multiple on \(\Sigma^\pm\).
Therefore, if \(\gamma(s)\subset \Sigma^\pm\) is an integral curve of \(H_p\), then
\[
\dot\gamma(s)=c(\gamma(s))\,H_{p^\pm}(\gamma(s)),
\qquad c:=p^\mp|_{\Sigma^\pm}\approx 1.
\]
Define a new parameter \(r\) by
\[
\frac{dr}{ds}=c(\gamma(s)).
\]
Since \(c\neq 0\), this change of parameter is monotone, and for
\(\widetilde\gamma(r):=\gamma(s(r))\), the chain rule gives
\[
\frac{d}{dr}\widetilde\gamma(r)=H_{p^\pm}(\widetilde\gamma(r)).
\]
Hence \(\widetilde\gamma\) is an integral curve of \(H_{p^\pm}\). The converse is obtained by
reversing the argument.
Thus the null-bicharacteristics of \(p\) are exactly the null-bicharacteristics of \(p^\pm\),
up to monotone reparametrization. 

Finally, we note that GCC holds by a change of variables, since $c\approx 1$ globally.
\end{proof}
\begin{remark}
In order to simplify notation, we will continue to use $L$ and $\delta$ as parameters for the flow of $p^{\pm}$ in what follows.
\end{remark}

\begin{proposition}\label{prop:nb-escape}
If there exist $w\in T^*\R^4\setminus o$, $\delta>0$, and $s'>0$ such that
\begin{equation}\label{eq:esc}
    |x^\pm_{\pm s'}(w)|\ge \max\{2R_0,\ |x|+\delta\},
\end{equation}
then $|x^\pm_{\pm s}(w)|\to\infty$ as $s\to\infty$ (in particular, \eqref{eq:esc} holds for all $s\ge s'$). 
\end{proposition}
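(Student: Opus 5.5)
The argument is a convexity (Morawetz-type) computation for the radial function $\rho(s) = |x^\pm_{\pm s}(w)|^2$ along the half-wave flow in the far region $|x|\ge R_0$, where by \eqref{eq:R_0} the metric is $\mathbf{c}$-close to Minkowski in the $AF$ norm. Fix the sign $+$ with forward flow; the other cases are symmetric. We may normalize by homogeneity (Proposition~\ref{rof:homogeneity}), since exit properties of $x_s$ are invariant under fiber dilation. On $\Sigma^+$ with $g=m$, one has $b^+=|\xi|$, $\dot x = -\xi/|\xi|$ up to the sign conventions above, and $\dot\xi=0$. Hence $\frac{d^2}{ds^2}\rho = 2|\dot x|^2 = 2$, and $\rho$ is strictly convex.

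For the perturbed flow we use \eqref{propsys2}. We have $\dot x = \nabla_\xi b^+$, and $|\nabla_\xi b^+|\approx 1$ by uniform ellipticity \eqref{eq:uniform_ellipticity}. We also have $\dot\xi = -\nabla_x b^+ = O(\langle x\rangle^{-1}|\xi|)$ by \eqref{AFcons}, with smallness $O(\mathbf{c}\langle x\rangle^{-1}|\xi|)$ for $|x|>R_0$. Compute
\[
\ddot\rho = 2|\dot x|^2 + 2x\cdot \ddot x ,
\]
where $\ddot x = \partial_t\nabla_\xi b^+ + (\nabla_x\nabla_\xi b^+)\dot x + (\nabla_\xi^2 b^+)\dot\xi$. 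Every term is bounded by $\mathbf{c}\langle x\rangle^{-1}$, using that $\nabla_\xi^2 b^+$ is homogeneous of degree $-1$ in $\xi$. The $\partial_t$ term is bounded by $\|g-m\|_{AF}$ as well, since the $AF$ norm controls $\langle x\rangle\partial g$ in all spacetime directions. Thus for $|x_s|\ge R_0$ we get $\ddot\rho \ge 2|\dot x|^2 - C\mathbf{c}\ge c_*>0$ once $\mathbf{c}$ is small.

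Next we check the sign of $\dot\rho$ at $s'$. The hypothesis \eqref{eq:esc} says $|x_{s'}|\ge |x_0|+\delta$ and $|x_{s'}|\ge 2R_0$. Consider the last time $s_1<s'$ at which $|x_s|\le \max(R_0,|x_0|)$; if there is no such time, take $s_1=0$. On $[s_1,s']$ the trajectory lies in $|x|\ge R_0$, so $\rho$ is convex there. Since $\rho(s')>\rho(s_1)$, the mean value theorem produces a point with $\dot\rho>0$, and convexity then forces $\dot\rho(s')>0$. Convexity continues as long as $|x_s|\ge R_0$, and $\dot\rho$ is increasing there, so $\rho$ keeps increasing and the trajectory never returns to $|x|=R_0$. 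Then $\ddot\rho\ge c_*$ for all $s\ge s'$ gives $\rho(s)\gtrsim s^2$, so $|x_s|\to\infty$, and in particular \eqref{eq:esc} persists for $s\ge s'$.

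The main obstacle is the uniform lower bound on $\ddot\rho$ in the non-stationary setting. We must make sure the $\dot\tau$ and $\partial_t g$ contributions, which are absent in \cite{kof23}, are controlled by $\mathbf{c}\langle x\rangle^{-1}$ uniformly in $t$, and that the decay $\langle x\rangle^{-1}$ multiplied by $|x|$ from $x\cdot\ddot x$ still yields a small constant. For the time-derivative term I would first try to use that $\|g-m\|_{AF}$ includes $\partial_t$ among the $\partial^\gamma$. If that term instead requires Assumption~\ref{asmp:sv}, I would fall back on using only the $\langle x\rangle^{-1}$ decay together with a larger choice of $R_0$.
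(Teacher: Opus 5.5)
Your proposal is correct and follows essentially the same route as the paper: convexity of $|x_s|^2$ on $\{|x|\ge R_0\}$, a mean value argument to produce an outgoing radial velocity, and forward propagation. Your threshold $\max(R_0,|x_0|)$ simply merges the paper's two cases into one.

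The one difference is how you bound $x\cdot\ddot x$. You use the smallness $|\partial g|\lesssim c_j\langle x\rangle^{-1}\le \mathbf{c}\langle x\rangle^{-1}$, where $\partial$ in the AF norm includes $\partial_t$. The paper instead asserts the stronger decay $|\ddot x|\lesssim |x|^{-2}$ and enlarges $R_0$. Your bound is what the AF hypothesis actually gives, so the fallback in your last paragraph is unnecessary; note also that bare $\langle x\rangle^{-1}$ decay, without the smallness of $c_j$, would only bound $x\cdot\ddot x$ by $O(1)$.
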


\begin{proof}
We treat the forward flow $x_s^+(w)$ associated to
\[
p^+(t,x,\tau,\xi)=\tau-b^+(t,x,\xi),
\]
and write $\varphi_s^+(w)=(t_s,x_s,\tau_s,\xi_s)$. The remaining sign choices are handled by the
same argument after reversing the flow parameter.

Since $b^+$ is positively homogeneous of degree $1$ in $\xi$, the quantity $\partial_\xi b^+$ is
homogeneous of degree $0$ in $\xi$. Consequently the spatial velocity
$\dot x_s=-\partial_\xi b^+(t_s,x_s,\xi_s)$ depends only on the direction of $\xi_s$, and by
rescaling the initial covector we may assume $|\xi_0|=1$ without loss of generality (the $x$--projection
of the bicharacteristic is unchanged).

Our aim is to prove a uniform convexity estimate for the function
\[
r(s):=|x_s|^2
\]
once the trajectory is outside a sufficiently large ball. Hamilton's equations are
\begin{equation}\label{eq:ham_pplus}
\left\{
\begin{aligned}
\dot t_s   &= \partial_\tau p^+(t_s,x_s,\tau_s,\xi_s)=1,\\
\dot x_s   &= \partial_\xi p^+(t_s,x_s,\tau_s,\xi_s)=-\partial_\xi b^+(t_s,x_s,\xi_s),\\
\dot \tau_s&= -\partial_t p^+(t_s,x_s,\tau_s,\xi_s)=\partial_t b^+(t_s,x_s,\xi_s),\\
\dot \xi_s &= -\partial_x p^+(t_s,x_s,\tau_s,\xi_s)=\partial_x b^+(t_s,x_s,\xi_s).
\end{aligned}
\right.
\end{equation}
Differentiating $r(s)$ gives
\[
\dot r(s)=2x_s\cdot \dot x_s,\qquad
\ddot r(s)=2|\dot x_s|^2+2x_s\cdot \ddot x_s.
\]

We now invoke the asymptotic flatness hypotheses (uniformly in $t$) in the region $|x|\ge R_0$.
In particular, for $|x|\ge R_0$ and $|\xi|=1$ one has the standard symbol asymptotics
\begin{equation}\label{eq:b_symbol_bounds}
\partial_\xi b^+(t,x,\xi)=\frac{\xi}{|\xi|}+O(|x|^{-1}),\qquad
\partial_{t\xi}b^+(t,x,\xi)=O(|x|^{-2}),\qquad
\partial_{x\xi}b^+(t,x,\xi)=O(|x|^{-2}),
\end{equation}
together with
\begin{equation}\label{eq:b_more_bounds}
\partial_x b^+(t,x,\xi)=O(|x|^{-2}),\qquad
\partial_{\xi\xi}b^+(t,x,\xi)=O(1),
\end{equation}
where all implicit constants are independent of $t$. The first relation in \eqref{eq:b_symbol_bounds}
implies that, after enlarging $R_0$ if necessary, there exists $c_0>0$ such that
\begin{equation}\label{eq:velocity_lb}
|\dot x_s|=\bigl|\partial_\xi b^+(t_s,x_s,\xi_s)\bigr|\ge c_0
\qquad\text{whenever }|x_s|\ge R_0.
\end{equation}

Next we estimate $\ddot x_s$. Differentiating $\dot x_s=-\partial_\xi b^+(t_s,x_s,\xi_s)$ along the flow
and using $\dot t_s=1$ gives
\[
\ddot x_s
=-\partial_{t\xi}b^+(t_s,x_s,\xi_s)
-\partial_{x\xi}b^+(t_s,x_s,\xi_s)\,\dot x_s
-\partial_{\xi\xi}b^+(t_s,x_s,\xi_s)\,\dot \xi_s.
\]
Substituting $\dot \xi_s=\partial_x b^+(t_s,x_s,\xi_s)$ from \eqref{eq:ham_pplus}, and using
\eqref{eq:b_symbol_bounds}--\eqref{eq:b_more_bounds} together with the bound $|\dot x_s|=O(1)$ from
\eqref{eq:velocity_lb}, we obtain
\[
|\ddot x_s|\lesssim |x_s|^{-2}\qquad\text{whenever }|x_s|\ge R_0,
\]
hence
\begin{equation}\label{eq:radial_error}
|x_s\cdot \ddot x_s|\le |x_s|\,|\ddot x_s|\lesssim |x_s|^{-1}\qquad\text{whenever }|x_s|\ge R_0.
\end{equation}

Combining \eqref{eq:velocity_lb} and \eqref{eq:radial_error} yields, for $|x_s|\ge R_0$,
\[
\ddot r(s)=2|\dot x_s|^2+2x_s\cdot \ddot x_s
\ge 2c_0^2 - C|x_s|^{-1}.
\]
Choosing $R_0$ large enough so that $C/R_0\le c_0^2$ gives a uniform constant $c:=c_0^2>0$ with
\begin{equation}\label{eq:accel_bound_correct}
\ddot r(s)\ge c\qquad\text{whenever }|x_s|\ge R_0.
\end{equation}
Crucially, all constants here are independent of the time variable because the asymptotic flatness
bounds are uniform in $t$.

With \eqref{eq:accel_bound_correct} in hand, we now locate a time at which $r$ is already outside
$R_0^2$ and has positive derivative. By hypothesis, $|x_{s'}|\ge \max\{2R,|x|+\delta\}$ and $R\ge R_0$,
so $r(s')\ge (2R_0)^2>R_0^2$ and $r(s')>r(0)$.

If $r(s)>R_0^2$ for every $s\in[0,s']$, then the mean value theorem gives some $s''\in(0,s')$ with
\[
\dot r(s'')=\frac{r(s')-r(0)}{s'}>0,
\]
and in particular $|x_{s''}|>R_0$. Otherwise, define
\[
\alpha:=\sup\{\,s\in[0,s'):\ r(s)\le R_0^2\,\}.
\]
Continuity of $r$ implies $\alpha<s'$ and $r(\alpha)=R_0^2$, while $r(s)>R_0^2$ for all $s\in(\alpha,s']$.
Applying the mean value theorem on $(\alpha,s')$ produces $s''\in(\alpha,s')$ such that
\[
\dot r(s'')=\frac{r(s')-r(\alpha)}{s'-\alpha}=\frac{r(s')-R_0^2}{s'-\alpha}>0,
\]
and again $|x_{s''}|>R_0$.

Thus in all cases we have found $s''\in(0,s')$ with
\begin{equation}\label{eq:critical_correct}
|x_{s''}|>R_0\qquad\text{and}\qquad \dot r(s'')>0.
\end{equation}

We now propagate this information forward. Since $|x_{s''}|>R_0$, the convexity bound
\eqref{eq:accel_bound_correct} applies at $s''$. Moreover, as long as $|x_s|\ge R_0$ we have
$\ddot r(s)\ge c>0$, so $\dot r$ is strictly increasing. In particular, \eqref{eq:critical_correct}
implies $\dot r(s)\ge \dot r(s'')>0$ for all $s\ge s''$ for which $|x_s|\ge R_0$. This forces $r(s)$
to be strictly increasing on such intervals, and therefore $r(s)\ge r(s'')>R_0^2$ for all $s\ge s''$.
Consequently $|x_s|>R_0$ for every $s\ge s''$, so the lower bound \eqref{eq:accel_bound_correct} holds
globally for all $s\ge s''$.

Integrating \eqref{eq:accel_bound_correct} twice for $s\ge s''$ yields
\[
\dot r(s)\ge \dot r(s'')+c(s-s''),\qquad
r(s)\ge r(s'')+\dot r(s'')(s-s'')+\frac{c}{2}(s-s'')^2.
\]
The quadratic term forces $r(s)\to\infty$ as $s\to\infty$, hence $|x_s|\to\infty$. Since $s''<s'$,
the same conclusion holds in particular for all $s\ge s'$. 
\end{proof}

\subsection{$L$-trapped Symbol Construction}\label{sec:semi_trapped}
We begin by constructing a symbol on the $L$-trapped set $\A^\pm_{L}$.
\begin{lemma}\label{lem:semitrap}
Define
\[
\alpha_s^\pm(w):=a(x_s^\pm(w)),
\]
and
\[
q^\pm(w):=\int_0^L (L-s)\big(\alpha_s^\pm(w)-\alpha_{-s}^\pm(w)\big)\,ds.
\]
Then there exist neighborhoods $V^\pm$ of $\A_{L}^{\pm}$ in $\Sigma^\pm$ and constants $C,c>0$, such that
\[
H_{p^\pm}q^\pm + C a \ge c\1_{V^\pm}.
\]
\end{lemma}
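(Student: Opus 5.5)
We first compute $H_{p^\pm}q^\pm$ explicitly using the group property of the flow. Since $\varphi^\pm_\sigma\circ\varphi^\pm_s=\varphi^\pm_{s+\sigma}$, we have $H_{p^\pm}\alpha^\pm_s(w)=\frac{d}{d\sigma}\big|_{\sigma=0}a(x^\pm_{s+\sigma}(w))=\partial_s\alpha^\pm_s(w)$. Similarly $H_{p^\pm}\alpha^\pm_{-s}=-\partial_s(\alpha^\pm_{-s})$. Differentiating under the integral and integrating by parts in $s$ then gives
\[
H_{p^\pm}q^\pm(w)=\int_0^L (L-s)\,\partial_s\big(\alpha^\pm_s+\alpha^\pm_{-s}\big)\,ds
=-2L\,a(x^\pm(w))+\int_0^L\big(\alpha^\pm_s(w)+\alpha^\pm_{-s}(w)\big)\,ds .
\]
The boundary term at $s=L$ vanishes because of the weight $L-s$; the one at $s=0$ is $-L(\alpha_0+\alpha_0)$. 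Taking $C\ge 2L$ and using $a\ge0$, we obtain
\[
H_{p^\pm}q^\pm+Ca\ \ge\ \int_0^L a(x^\pm_s(w))\,ds+\int_0^L a(x^\pm_{-s}(w))\,ds ,
\]
and both integrals are nonnegative. This gives $H_{p^\pm}q^\pm+Ca\ge0$ everywhere on $\Sigma^\pm$, so it remains to get the lower bound $c$ on a neighborhood of $\A_L^\pm$.

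On $\A_L^{f,\pm}$ the forward trajectory satisfies $x^\pm_{[0,L]}(w)\subset\{|x|\le R_0\}$. Assumption~\ref{ass:GCC}, transferred to $p^\pm$ by Proposition~\ref{prop:bichar_partition}, then gives $\int_0^L a(x_s^\pm(w))\,ds\ge \delta L$. On $\A_L^{b,\pm}$ we apply GCC to the backward trajectory (the assumption is stated uniformly in both flow directions) and get $\int_0^L a(x^\pm_{-s}(w))\,ds\ge\delta L$. Hence the right-hand side is at least $\delta L$ on $\A^\pm_L$.

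To pass to a neighborhood $V^\pm$, we define
\[
V^\pm:=\Big\{w\in\Sigma^\pm:\ \int_0^L\big(\alpha_s^\pm+\alpha_{-s}^\pm\big)(w)\,ds>\tfrac{\delta L}{2}\Big\}.
\]
This set is open, since the integrand depends continuously on $w$ through the flow, and it contains $\A_L^\pm$. On $V^\pm$ the bound holds with $c=\delta L/2$, which proves the lemma.

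The main obstacle is uniformity in $t$. The statement is used on the noncompact (in $t$) set, and a naive compactness argument giving a neighborhood of uniform size fails. The definition of $V^\pm$ above sidesteps this, because it is defined by the lower bound itself rather than as a metric neighborhood. If a uniform-width neighborhood is needed later, we would use that $\varphi_s^\pm$ is Lipschitz in $w$ for $|s|\le L$ with constants independent of $t$. This follows from Gronwall, since the coefficients of $H_{p^\pm}$ and their first derivatives are bounded uniformly in $t$ on the cosphere bundle by the AF bounds \eqref{AFcons}, together with $\|\nabla a\|_\infty<\infty$. It makes the integral uniformly Lipschitz in $w$, so a tube of fixed radius around $\A_L^\pm$ (in the fiber-normalized variables, using Proposition~\ref{rof:homogeneity} for $0$-homogeneity of $q^\pm$) lies in $V^\pm$.

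A secondary technical point is the smoothness of $q^\pm$, which is needed for $H_{p^\pm}q^\pm$ to make sense. This follows from smooth dependence of the flow on initial data over the compact parameter range $|s|\le L$.
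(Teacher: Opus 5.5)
Your proof is correct and follows the paper's argument. You use the same flow-property computation $H_{p^\pm}q^\pm=-2La+\int_0^L(\alpha^\pm_s+\alpha^\pm_{-s})\,ds$, apply GCC on the forward and backward parts of $\A_L^\pm$, and use continuity to pass to a neighborhood. The one difference is a small simplification: you absorb the $-2La$ term everywhere with $C\ge 2L$ and take $V^\pm$ to be the open superlevel set of the integral, whereas the paper splits into the regions $a\le \delta/(4L)$ and $a\ge \delta/(4L)$ and takes $C=3L$.
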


\begin{proof}
Let $U^\pm$ be an open neighborhood of $\A_{L}^\pm$ in $\Sigma^\pm$. For each fixed $s\in\R$, set
\[
\alpha_s^\pm(w):=a(x_s^\pm(w))=a(\varphi_s^\pm(w)).
\]
Using the flow property
\[
\varphi_s^\pm(\varphi_h^\pm(w))=\varphi_{s+h}^\pm(w),
\]
we obtain
\[
\alpha_s^\pm(\varphi_h^\pm(w))=\alpha_{s+h}^\pm(w),
\]
hence
\[
H_{p^\pm}\alpha_s^\pm(w)=\partial_s\alpha_s^\pm(w),
\qquad
H_{p^\pm}\alpha_{-s}^\pm(w)=-\partial_s\alpha_{-s}^\pm(w).
\]
Therefore
\[
H_{p^\pm}\big(\alpha_s^\pm-\alpha_{-s}^\pm\big)
=\partial_s\big(\alpha_s^\pm+\alpha_{-s}^\pm\big).
\]

Differentiating under the integral and integrating by parts yields
\begin{align*}
H_{p^\pm}q^\pm(w)
&=\int_0^L (L-s)\,\partial_s\big(\alpha_s^\pm(w)+\alpha_{-s}^\pm(w)\big)\,ds \\
&=\Big[(L-s)\big(\alpha_s^\pm(w)+\alpha_{-s}^\pm(w)\big)\Big]_{s=0}^{s=L}
  +\int_0^L \big(\alpha_s^\pm(w)+\alpha_{-s}^\pm(w)\big)\,ds \\
&=-2L\,\alpha_0^\pm(w)
  +\int_0^L \alpha_s^\pm(w)\,ds
  +\int_0^L \alpha_{-s}^\pm(w)\,ds \\
&=-2L\,a(w)
  +\int_0^L a(x_s^\pm(w))\,ds
  +\int_0^L a(x_{-s}^\pm(w))\,ds.
\end{align*}

Set
\[
U_0^\pm:=U^\pm\cap \Bigl\{w\in\Sigma^\pm:\ a(w)\le \frac{\delta}{4L}\Bigr\}.
\]
Fix $w_0\in \Omega_{R,L}^\pm\cap U_0^\pm$. If $w_0\in \A_{L}^{f,\pm}$, then by definition
\[
x^\pm_{[0,L]}(w_0)\subset \{|x|\le R\},
\]
so GCC gives
\[
\int_0^L a(x_s^\pm(w_0))\,ds \ge \delta.
\]
If instead $w_0\in \A_{L}^{b,\pm}$, then
\[
x^\pm_{[-L,0]}(w_0)\subset \{|x|\le R\},
\]
so GCC gives
\[
\int_0^L a(x_{-s}^\pm(w_0))\,ds \ge \delta.
\]
In either case,
\[
H_{p^\pm}q^\pm(w_0)
\ge \delta - 2L\cdot \frac{\delta}{4L}
= \frac{\delta}{2}.
\]
By continuity, there exists an open neighborhood $U_{w_0}\subset U^\pm$ of $w_0$
such that
\[
H_{p^\pm}q^\pm(w)\ge \frac{\delta}{4}
\qquad \text{for all } w\in U_{w_0}\cap U_0^\pm.
\]
Define
\[
V_0^\pm:=\Bigl(\bigcup_{w_0\in \A_{L}^\pm\cap U_0^\pm}U_{w_0}\Bigr)\cap U_0^\pm.
\]
Then $V_0^\pm$ is an open neighborhood of
\[
\A_{L}^\pm\cap \Bigl\{a\le \frac{\delta}{4L}\Bigr\}
\]
in $U^\pm$, and
\[
H_{p^\pm}q^\pm\ge \frac{\delta}{4}
\qquad \text{on } V_0^\pm.
\]

Now consider the complementary region
\[
V_1^\pm:=U^\pm\cap \Bigl\{w\in\Sigma^\pm:\ a(w)\ge \frac{\delta}{4L}\Bigr\}.
\]
Since \(a\ge 0\), the identity above implies
\[
H_{p^\pm}q^\pm(w)\ge -2L\,a(w).
\]
Hence
\[
H_{p^\pm}q^\pm(w)+C a(w)\ge (C-2L)a(w).
\]
Using \(a(w)\ge \delta/(4L)\) on \(V_1^\pm\), we obtain
\[
H_{p^\pm}q^\pm(w)+C a(w)
\ge (C-2L)\frac{\delta}{4L}.
\]
It suffices to choose \(C=3L\), in which case
\[
H_{p^\pm}q^\pm + C a \ge \frac{\delta}{4}
\qquad\text{on }V_1^\pm.
\]

Finally set
\[
V^\pm:=V_0^\pm\cup V_1^\pm,
\qquad
c:=\frac{\delta}{4}.
\]
Then $V^\pm$ is an open neighborhood of $\A_{L}^\pm$ in $\Sigma^\pm$ and
\[
H_{p^\pm}q^\pm + C a \ge c \1_{V^\pm}
\]
This proves the lemma.
\end{proof}

\subsection{Interior Non-Trapped Set}\label{sec:non_trapped}
With the escape function constructed for interior semi-trapped null-bicharacteristics, we move to constructing an escape function on the remainder of the interior set. The construction here follows that of \cite{kof23} adapted to $L$-trapped sets. To begin we construct a function used to control errors in the exterior region.

\begin{proposition}\label{prop:f}
    Let $\sigma > 0$. Then there exists $f \in C^\infty$, satisfying $f(r) \approx_\sigma 1$ when $r > R_0$ and $f'(r) \approx \sigma c_j 2^{-j} f(r)$ when $r \approx 2^j > R_0$.
\end{proposition}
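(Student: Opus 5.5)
The plan is to define $f$ through the ODE $f' = \sigma\, \tilde c(r)\, r^{-1} f$, where $\tilde c(r)$ is a smooth interpolant of the sequence $c_j$. Explicitly, set
\[
f(r) = \exp\Bigl(\sigma \int_{0}^{r} \tilde c(\rho)\,\frac{d\rho}{\langle \rho\rangle}\Bigr).
\]
We choose $\tilde c \ge 0$ smooth, with $\tilde c(\rho) \approx c_j$ for $\rho \approx 2^j > R_0$. For $\rho \lesssim R_0$ we take any bounded nonnegative smooth choice, for instance $\tilde c(\rho)=c_{j_0}$ extended smoothly, where $2^{j_0}\approx R_0$.

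One concrete interpolant is $\tilde c(\rho) = \sum_j c_j \psi_j(\rho)$, where $\{\psi_j\}$ is a smooth dyadic partition of unity subordinate to $\{\rho\approx 2^j\}$. For $\rho \approx 2^j$ only boundedly many indices $k$ with $|k-j|\le 1$ contribute. The slow variation condition $c_k/c_j \le 2^{\delta|k-j|}$ then gives $\tilde c(\rho)\approx c_j$ with constants close to $1$.

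The two required properties follow from this construction.

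\textbf{Derivative bound.} By the fundamental theorem of calculus, $f'(r) = \sigma \tilde c(r)\langle r\rangle^{-1} f(r)$. For $r\approx 2^j > R_0$ we have $\langle r\rangle \approx 2^j$ and $\tilde c(r)\approx c_j$, which yields $f'(r)\approx \sigma c_j 2^{-j} f(r)$. Smoothness of $f$ follows from the smoothness of $\tilde c$ and of $\langle\rho\rangle^{-1}$.

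\textbf{Size of $f$.} Since $\tilde c\ge 0$, $f \ge 1$ everywhere. For the upper bound we estimate the exponent by splitting the integral into dyadic pieces:
\[
\int_0^\infty \tilde c(\rho)\frac{d\rho}{\langle\rho\rangle} \lesssim \int_0^{R_0}\tilde c\,\frac{d\rho}{\langle \rho\rangle} + \sum_{2^j > R_0} c_j \int_{\rho\approx 2^j}\frac{d\rho}{\rho} \lesssim C(R_0) + \sum_j c_j \lesssim C(R_0) + \mathbf{c}.
\]
The interior contribution can be made harmless by normalizing the lower limit of integration at $R_0$ instead of $0$, so that the constant term disappears. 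Then $1\le f(r) \le e^{C\sigma \mathbf{c}}$ for all $r>R_0$, so $f\approx_\sigma 1$ there.

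The only delicate point is ensuring $\tilde c(r)\approx c_j$ with constants uniform in $j$. This is exactly where the slow variation hypothesis on $\{c_j\}$ enters, since it controls the ratio between neighbouring coefficients. Summability of $c_j$, which comes from the AF assumption, is what makes $f$ bounded.
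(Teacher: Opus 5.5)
Your proof is correct. The paper states this proposition without proof; it is the standard weight used in \cite{MST20} and \cite{kof23}. Your construction is the standard one: take $f$ to be the exponential of $\sigma\int \tilde c(\rho)\langle\rho\rangle^{-1}\,d\rho$, where $\tilde c\approx c_j$ is a smooth interpolant obtained from the slow-variation bound. This gives $f'=\sigma\tilde c\,\langle r\rangle^{-1} f$ exactly, and the summability $\sum_j c_j\lesssim\mathbf{c}$ bounds $f$ on $\{r>R_0\}$. You also handle the one bookkeeping point correctly: setting $\tilde c$ equal to the constant $c_{j_0}$ below $R_0$ (equivalently, extending $c_j$ by $c_{j_0}$ for $j<j_0$) preserves the bound $c_j/c_k\le 2^{\delta|j-k|}$, so $\tilde c\approx c_j$ holds uniformly up to the boundary scale.
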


\begin{lemma}\label{lem:nontrap}
    Let $R_0$ be from \eqref{eq:R_0}. Then there exist $q^\pm \in C^\infty(S^*\R^4)$ and $W^\pm \subset \Sigma^\pm \setminus \A^\pm_{L}$ such that $V^\pm_{0} \cup W^\pm = \Sigma^\pm$ and
    \begin{equation}
        H_{p^\pm} q^\pm \gtrsim c_j 2^{-j} \1_{W^\pm}, \quad \abs{x} \approx 2^j.
    \end{equation}
    In addition, $q^\pm = \e q_{in}^\pm + q^\pm_{\mathrm{ext}}$, where $q^\pm_{in} \in C^\infty(S^*\R^4)$ is supported in $\{\abs{x} \leq 4R_0\}$, $q^\pm_{\mathrm{ext}} \in S^0_{hom}(S^*\R^4)$, and $\e > 0$ is sufficiently small.
\end{lemma}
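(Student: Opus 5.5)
We follow the strategy of \cite{kof23}: an exterior Morawetz-type symbol gives positivity far out, and an interior "time spent in the ball'' symbol handles the interior non-trapped set. Throughout we work on one sheet $\Sigma^\pm$ and suppress the sign.

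\emph{Exterior part.} We take $q_{\mathrm{ext}} = f(|x|)\,\frac{x}{|x|}\cdot \partial_\xi p^\pm$, suitably normalized to be $0$-homogeneous, with $f$ from Proposition~\ref{prop:f}. Smooth it near $x=0$ by replacing $x/|x|$ with $x/\langle x\rangle$.

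On $\Sigma^\pm\cap\{|x|\approx 2^j\geq R_0\}$ compute $H_{p^\pm}q_{\mathrm{ext}}$. For the flat part of the symbol this gives the term $f'(r)(\hat x\cdot\hat\xi)^2 + \frac{f}{r}(1-(\hat x\cdot\hat\xi)^2)$. The first summand is $\gtrsim \sigma c_j2^{-j}$ and the second is $\gtrsim 2^{-j}\ge c_j 2^{-j}$.

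The metric errors are controlled by the $AF$ bound $\|g-m\|_{AF(A_{2^j})}\lesssim c_j$. They contribute $O(c_j 2^{-j})$ with a constant independent of $\sigma$, uniformly in $t$. Choosing $\sigma$ large absorbs them, since $f\approx_\sigma 1$ and only $f'$ carries the factor $\sigma$. The slow variation of $c_j$ handles adjacent shells. This yields
\[
H_{p^\pm}q_{\mathrm{ext}}\gtrsim c_j2^{-j}\quad\text{for } |x|\ge R_0.
\]
In $\{|x|\le R_0\}$ the term $H_{p^\pm}q_{\mathrm{ext}}$ is merely bounded, say by $C_{\mathrm{ext}}$.

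\emph{Interior part.} Set $W=\Sigma^\pm\setminus V_0'$, where $V_0'$ is a slightly smaller closed neighborhood of $\A_L$ chosen so that $V_0\cup W=\Sigma^\pm$. By definition of $\A_L$, every $w\in W$ with $|x(w)|\le R_0$ exits the ball $\{|x|\le R_0\}$ forward and backward within flow time $L$. Using Proposition~\ref{prop:nb-escape} after at most an additional uniform time, we get that forward or backward (we take both) it reaches $|x|\ge 2R_0$ and stays out.

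Let $\chi$ be a cutoff to $\{|x|\le 2R_0\}$ that equals $1$ on $\{|x|\le R_0\}$, and define
\[
q_{in}(w)=\int_0^{T_*}\bigl(\chi(x_{-s}(w))-\chi(x_s(w))\bigr)\,ds,
\]
where $T_*$ is a uniform escape time to $|x|\ge 4R_0$. Exactly as in the computation for Lemma~\ref{lem:semitrap},
\[
H_{p^\pm}q_{in}=2\chi(x)-\chi(x_{T_*})-\chi(x_{-T_*})=2\chi(x)\quad\text{on } W.
\]
Multiplying by a further cutoff makes $q_{in}$ supported in $\{|x|\le 4R_0\}$; the errors this creates lie where the exterior term is positive. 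Smoothness follows from smoothing the escape-time cutoff.

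\emph{Combination.} Take $q=\e q_{in}+q_{\mathrm{ext}}$. This is the reverse of what one might first guess: $q_{in}$ must dominate in the interior, so rather we take $q=Mq_{in}+q_{\mathrm{ext}}$ with $M$ large. In the statement's normalization this is the same as $\e^{-1}$ scaling applied to $q_{\mathrm{ext}}$, and we would absorb the constant into the homogeneous $q_{\mathrm{ext}}$. In the interior, $2M\chi$ beats $C_{\mathrm{ext}}$. In the transition region $R_0\le|x|\le4R_0$, the cutoff errors of $q_{in}$ are bounded by $M\cdot O(1)$ and must be absorbed by $q_{\mathrm{ext}}$'s positivity $\gtrsim c_j2^{-j}$.

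This balancing is the main obstacle. To resolve it we would arrange the $q_{in}$ cutoff errors to be signed, which is possible because outgoing trajectories only leave. Alternatively, we would enlarge $\sigma$ by making $f'$ large on $[R_0,4R_0]$ only. The crucial point, to be checked last, is that all the constants $T_*$, $C_{\mathrm{ext}}$, and the neighborhoods are uniform in $t$. This holds because of the GCC uniformity and the $t$-uniform $AF$ bounds.
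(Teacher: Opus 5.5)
There is a genuine gap in the gluing step. You flag it as the main obstacle but leave it open, and the route you commit to cannot be closed as written.

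\emph{Why your route fails.} Because your $q_{\mathrm{ext}}$ is not cut off, $H_{p^\pm}q_{\mathrm{ext}}$ is an unsigned $O(C_{\mathrm{ext}})$ term on $\{|x|\le R_0\}$. This forces $q=Mq_{in}+q_{\mathrm{ext}}$ with $M$ large. Now truncate $q_{in}$ with the natural radially non-increasing $\tilde\chi$, equal to $1$ on $\{|x|\le 2R_0\}$ and vanishing off $\{|x|\le 4R_0\}$. The truncation error is $q_{in}H_{p^\pm}\tilde\chi=\tilde\chi'(|x|)\,q_{in}\,\tfrac{d}{ds}|x_s|$.

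By the exterior convexity $\ddot r\ge c$ from the proof of Proposition~\ref{prop:nb-escape}, at a point with $|x|>2R_0$ the forward ray of an outgoing point stays outside $\operatorname{supp}\chi$. Likewise the backward ray of an incoming point was never in it. Hence $q_{in}=\int_0^{T_*}\chi(x_{-s})\,ds\ge0$ in the first case and $q_{in}=-\int_0^{T_*}\chi(x_s)\,ds\le0$ in the second.

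So the truncation error is signed, but \emph{nonpositive}, and your first fix gives the wrong sign. On an outgoing ray that has just crossed the ball (no trapping needed), $q_{in}\gtrsim R_0$ and $|\tilde\chi'|\approx R_0^{-1}$. The error therefore contributes $\lesssim -M$ to $H_{p^\pm}q$, which the fixed Morawetz positivity $c_j2^{-j}$ cannot absorb once $M$ is large.

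Relatedly, $H_{p^\pm}q_{in}=2\chi(x)$ holds only on $W\cap\{|x|\le R_0\}$, not on all of $W$. Outside $\operatorname{supp}\chi$ it equals $-\chi(x_{T_*})-\chi(x_{-T_*})$, e.g.\ $-1$ at an outgoing point whose ray was in $\{|x|\le R_0\}$ at time $-T_*$.

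\emph{Your second fix.} It can be made to work, but only with ingredients you have not supplied:
\begin{itemize}
\item keep $f$ unchanged on $\{|x|\le R_0\}$, so that $C_{\mathrm{ext}}$ and $M$ do not move;
\item let $f$ grow by a large factor on a shell $\{R_0\le|x|\le R_1\}$ on which $\chi\equiv1$;
\item check $H_{p^\pm}q_{in}\ge0$ on that shell. This again follows from convexity: for $|x|\ge R_0$ and $T_*$ large, one of $x_{\pm T_*}$ lies outside $\operatorname{supp}\chi$, so $H_{p^\pm}q_{in}\ge 2\chi(x)-1$.
\end{itemize}
Every term of $H_{p^\pm}q_{\mathrm{ext}}$, the metric error included, carries a factor $f$ or $f'$, so the exterior positivity then dominates $M$ wherever $q_{in}$ misbehaves. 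Merely relabelling $Mq_{in}+q_{\mathrm{ext}}$ as ``$\e q_{in}+q_{\mathrm{ext}}$'' by rescaling proves nothing. Only the ratio of interior errors to exterior positivity matters.

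\emph{What the paper does.} It balances the other way around, by cutting off the exterior symbol: $q_{\mathrm{ext}}=-\chi_{>R_0}(|x|)f(|x|)\,\nabla_\xi b\cdot\frac{x}{|x|}$, which along the $p^+$-flow is $\chi_{>R_0}f\cdot\tfrac{d}{ds}|x_s|$. Since $\chi_{>R_0}f$ is radially nondecreasing, the derivative falling on it contributes $(\chi_{>R_0}f)'\bigl(\tfrac{d}{ds}|x_s|\bigr)^2\ge0$. This is the mirror image of the sign computation above. Hence $H_{p^\pm}q_{\mathrm{ext}}\gtrsim c_j2^{-j}\chi_{>R_0}$, with $q_{\mathrm{ext}}\equiv0$ on $\{|x|\le R_0\}$ and no bad interior term.

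The interior symbol then only needs to supply a small positive amount. The paper uses the forward-only $q_{in}=-\chi_{<2R_0}\int_0^L\psi(\varphi_s)\,ds$, with $\psi$ supported off the $L$-trapped set at radius $2R_0$. Then $\psi\circ\varphi_L=0$ on $\operatorname{supp}\psi$ and $H_{p^\pm}q_{in}=1$ on the interior non-trapped set. Its bounded unsigned errors, multiplied by $\e$, are absorbed into the exterior positivity; this is exactly why the statement has $\e$ small.

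Your symmetric $q_{in}$ would fit this scheme well, since it even has $H_{p^\pm}q_{in}\ge0$ on all of $\{|x|\le R_0\}$. It just needs to carry the small $\e$ and be paired with a cut-off $q_{\mathrm{ext}}$.
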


\begin{proof}
We prove the $+$--case and suppress superscripts. Thus $\varphi_s=\varphi_s^+$, $p=p^+$,
$b=b^+$, and $H_p=H_{p^+}$.

Let $R>R_0$ and let $L>0$ be from the GCC. We use the $L$-trapped set at
radius $2R_0$, which we denote by $\A_{2R_0,L}$. Additionally, we define the set of null-bicharacteristics escaping in uniform time $L$ by $\mathcal{E}_L := \Sigma \setminus \A_{L}$.

Let $V_{2R_0}\subset\Sigma$ be an open neighborhood containing \(\A_{2R_0, L} \cap S^*\R^4\). Define the interior non-trapped region
\[
U_{R_0} := \bigl(\mathcal{E}_L \cap \{|x|\le R_0\}\bigr)\setminus V_{2R_0},
\qquad
W := U_{R_0}\cup\{|x|>R_0\},
\]
so that $V_{2R_0}\cup W=\Sigma^+$.

Choose $\psi\in C^\infty(S^*\R^4)$ with $0\le \psi\le 1$ such that
\[
\supp\psi \subset \Omega_\infty\cap\Sigma^+\cap\{|x|\le 2R_0\},\qquad
\psi\equiv 1 \ \text{on } U_{R_0},\qquad
\supp\psi\cap V_{2R_0}=\emptyset.
\]
With this choice, every $w\in\supp\psi$ is \emph{not} $L$-almost trapped at radius $2R_0$,
hence $T_{2R_0}^{f}(w)<L$. Therefore there exists $s_w\in[0,L)$ such that $|x_{s_w}(w)|> 2R_0$.
Since also $|x(w)|\le R_0$ on $\supp\psi$, Proposition~\ref{prop:nb-escape} (with $\delta:=2R_0$) applies
and yields that $|x_s(w)|\to\infty$ as $s\to\infty$, and in particular $|x_s(w)|> 2R_0$
for all $s\ge s_w$. Because $\supp\psi\subset \{|x|\le 2R_0\}$, it follows that
\begin{equation}\label{eq:psi_terminal_vanish}
\psi(\varphi_L(w))=0\qquad \text{for all } w\in\supp\psi.
\end{equation}

We now define the interior escape function
\[
q_{\mathrm{in}}(w):=-\chi_{<2R}(|x|)\int_0^L \psi(\varphi_s(w))\,ds.
\]
Since the integrand is smooth and the integration is over a finite interval, $q_{\mathrm{in}}$ is a
smooth function on $S^*\R^4$. Moreover, $q_{\mathrm{in}}$ is supported where $\chi_{<2R_0}\neq 0$, hence
in $\{|x|\le 4R_0\}$.

To compute $H_p q_{\mathrm{in}}$, we use that for the forward flow $\varphi_s$,
\[
H_p(\psi\circ\varphi_s)=\frac{d}{ds}(\psi\circ\varphi_s).
\]
Applying $H_p$ to $q_{\mathrm{in}}$ and using the product rule gives
\[
H_p q_{\mathrm{in}}
=-(H_p\chi_{<2R})\int_0^L \psi(\varphi_s)\,ds
-\chi_{<2R}\int_0^L H_p(\psi\circ\varphi_s)\,ds.
\]
The second integral is evaluated by the fundamental theorem of calculus:
\[
\int_0^L H_p(\psi\circ\varphi_s)\,ds
=\int_0^L \frac{d}{ds}\psi(\varphi_s)\,ds
=\psi(\varphi_L)-\psi.
\]
Substituting this yields
\begin{equation}\label{eq:Hp_qin_narrative_buffer}
H_p q_{\mathrm{in}}
=\chi_{<2R_0}\psi
-\chi_{<2R_0}\psi(\varphi_L)
-(H_p\chi_{<2R_0})\int_0^L \psi(\varphi_s)\,ds.
\end{equation}
On $U_{R_0}$ we have $|x|\le R_0$, so $\chi_{<2R_0}\equiv 1$ and $H_p\chi_{<2R_0}\equiv 0$. Moreover
$\psi\equiv 1$ on $U_{R_0}$, and \eqref{eq:psi_terminal_vanish} implies $\psi(\varphi_L)=0$ on $U_{R_0}$.
Thus \eqref{eq:Hp_qin_narrative_buffer} reduces to
\[
H_p q_{\mathrm{in}}=1 \quad \text{on } U_{R_0}.
\]

The remaining terms in \eqref{eq:Hp_qin_narrative_buffer} are supported where
$\nabla\chi_{<2R_0}\neq 0$, i.e. on the fixed overlap annulus $\{2R\lesssim |x|\lesssim 4R_0\}$.
On this annulus we have the crude bound
\[
\Bigl|\int_0^L \psi(\varphi_s)\,ds\Bigr|\le L\|\psi\|_{L^\infty}\le L,
\]
and $|H_p\chi_{<2R_0}|$ is bounded by a constant depending only on $R_0$ (and the uniform bounds on the
coefficients of $H_p$ in $\{|x|\le 4R_0\}$), so the boundary term is uniformly bounded there.

We now introduce the exterior escape function. Let $f$ be as in Proposition~\ref{prop:f} and set
\[
q_{\mathrm{ext}}(w):=-\chi_{>R_0}(|x|)\,f(|x|)\,(\nabla_\xi b)\cdot \frac{x}{|x|}.
\]
A standard Morawetz-type computation, using asymptotic flatness and choosing the parameter in $f$
sufficiently large, yields the lower bound
\[
H_p q_{\mathrm{ext}} \gtrsim c_j 2^{-j}\chi_{>R_0}(|x|)
\qquad \text{on } |x|\approx 2^j,
\]
with constants uniform in $t$.

Finally we glue the interior and exterior pieces. Define
\[
q:=\epsilon q_{\mathrm{in}}+q_{\mathrm{ext}},
\]
with $\epsilon>0$ chosen sufficiently small. On $U_R$ we have $|x|\le R_0$, hence
$\chi_{>R_0}(|x|)=0$ and $H_p q=\epsilon H_p q_{\mathrm{in}}=\epsilon$, giving a uniform
positive lower bound on $U_{R_0}\subset W$. On $\{|x|>4R_0\}$ we have $\chi_{<2R_0}(|x|)=0$, hence
$H_p q=H_p q_{\mathrm{ext}}\gtrsim c_j2^{-j}$ on $|x|\approx 2^j$. In the overlap annulus
$\{2R_0\lesssim |x|\lesssim 4R_0\}$, the boundary term coming from $H_p q_{\mathrm{in}}$ is bounded
in absolute value by $C(R_0)L\,\epsilon$, while $H_p q_{\mathrm{ext}}$ has a fixed positive
lower bound there; choosing $\epsilon$ small ensures that the sum remains positive on this
annulus as well.

Combining these regions, we obtain
\[
H_p q \gtrsim c_j 2^{-j}\,\mathbf{1}_W
\qquad \text{on } |x|\approx 2^j,
\]
where $W=U_{R_0}\cup\{|x|>R_0\}$ and $V_{2R_0}\cup W=\Sigma^+$ by construction. Moreover,
$q_{\mathrm{in}}\in C^\infty(S^*\R^4)$ is supported in $\{|x|\le 4R_0\}$ and
$q_{\mathrm{ext}}\in S^0_{\mathrm{hom}}(S^*\R^4)$, which completes the proof.
\end{proof}

The next step is to combine the constructions in Lemmas \ref{lem:semitrap} and \ref{lem:nontrap} on each
light-cone to produce the desired symbol $q$ needed in Lemma \ref{mainlemma}. Additionally, we will
construct a Lagrangian correction that provides positivity on the elliptic set. These steps, again,
follow the construction in \cite{MST20} and \cite{kof23}, with additional care taken to deal with
the damping term and the non-stationarity.
 
\subsection{Proof of Main Lemma}\label{pf:mainlemma}
We now prove the the main symbol lemma. Namely,
\[H_p q + 2 \gamma \tau a q + pm \geq c \la x \ra^{-2}(\tau^2 + \abs{\xi}^2).\]
\begin{proof}[Proof of Lemma \ref{mainlemma}]
Denote by \(q_1^\pm\) the symbol constructed for the interior trapped set, and by
\(q_2^\pm\) the symbol constructed for the interior non-trapped set. 

Combining on the light-cones, we define
\begin{equation}
    q
    =
    (\tau-b^+)(q_{1}^-+q_{2}^-)
    +
    (\tau-b^-)(q_{1}^++q_{2}^+).
\end{equation}
To simplify notation, write
\begin{equation}
    q_j
    =
    (\tau-b^+)q_{j}^-
    +
    (\tau-b^-)q_{j}^+,
    \qquad j=1,2.
\end{equation}
Set
\[
    e:=H_pq+2\gamma\tau aq.
\]

Then, on the sheet \(\tau=b^\pm\), we have
\[
    q_j|_{\tau=b^\pm}
    =
    (b^\pm-b^\mp)q_{j}^\pm.
\]
Hence
\begin{align}
    e|_{\tau=b^\pm}
    &=
    \sum_{j=1}^2
    \left[
        (H_pq_j)|_{\tau=b^\pm}
        +
        2\gamma b^\pm(b^\pm-b^\mp)a q_{j}^\pm
    \right].
\end{align}

We next compute \((H_pq_j)|_{\tau=b^\pm}\). Since
\[
    p=(\tau-b^+)(\tau-b^-),
\]
on the sheet \(\tau=b^\pm\) we have
\[
    H_p=(b^\pm-b^\mp)H_{p^\pm}.
\]
Therefore
\begin{align}
    (H_p q_j)|_{\tau=b^\pm}
    &=
    (b^\pm-b^\mp)^2 H_{p^\pm}q_{j}^\pm  \nonumber\\
    &\quad
    +(b^\pm-b^\mp)q_{j}^\pm
    H_{p^\pm}(\tau-b^\mp).
\end{align}
Equivalently,
\begin{align}
    (H_p q_j)|_{\tau=b^\pm}
    &=
    (b^+-b^-)^2 H_{p^\pm}q_{j}^\pm  \nonumber\\
    &\quad
    +(b^\pm-b^\mp)q_{j}^\pm
    \left(
        -\partial_t b^\mp+\partial_t b^\pm
        +\nabla_\xi b^\pm\cdot\nabla_x b^\mp
        -\nabla_x b^\pm\cdot\nabla_\xi b^\mp
    \right).
\end{align}
Choosing \(\sigma\) sufficiently large, the derivative falling on
\(e^{\sigma q_j^\pm}\) gives
\begin{align}
    (H_p q_j)|_{\tau=b^\pm}
    \geq
    \frac12 \sigma (b^+-b^-)^2
    q_{j,\lambda}^\pm H_{p^\pm}q_j^\pm
    +
    E_j^\pm,
\end{align}
where \(E_j^\pm\) denotes lower-order error terms which are non-problematic. It is important to note here that, due to \eqref{resc}, the constant $\sigma$ can be chosen independent of $\gamma$.

Now choose \(\gamma \gg\sigma\) sufficiently large. For \(j=1\), Lemma \ref{lem:semitrap} gives
\begin{align}\label{tmp}
    (H_p q_1+2\gamma\tau a q_1)|_{\tau=b^\pm}
    &\gtrsim
    \frac12 \sigma (b^+-b^-)^2q_{1}^\pm
    \left(
        H_{p^\pm}q_1^\pm
        +
        \frac{4\gamma}{\sigma}
        \frac{b^\pm}{b^\pm-b^\mp}a
    \right)  \nonumber\\
    &\gtrsim
    \1_{V_R^\pm}
    (|\xi|^2+\tau^2).
\end{align}
Here
\[
    \frac{b^\pm}{b^\pm-b^\mp}
    =
    \frac{|b^\pm|}{b^+-b^-}
    >0
\]
on the corresponding sheet.

Similarly, for \(j=2\), using Lemma \ref{lem:nontrap} and the positivity of \(a\) on the
relevant region, we get
\begin{align}
    (H_p q_2+2\gamma\tau a q_2)|_{\tau=b^\pm}
    \gtrsim
    \1_{W^\pm}
    c_k2^{-k}(|\xi|^2+\tau^2),
    \qquad |x|\approx 2^k.
\end{align}
Combining the estimates for \(q_1\) and \(q_2\) gives
\begin{equation}
    e|_{\tau=b^\pm}
    =
    (H_pq+2\gamma\tau aq)|_{\tau=b^\pm}
    \gtrsim
    \langle x\rangle^{-2}(|\xi|^2+\tau^2).
\end{equation}

To conclude the proof, we construct a Lagrangian correction which gives positivity
away from the characteristic set. At this stage, it
suffices to work in the spatially compact region
\[
    |x|\leq 2R_0.
\]
In particular, on this region we have \(\langle x\rangle\approx 1\), and the symbol
bounds below are uniform in \((t,x)\).

Set
\(\rho:=\tau^2+|\xi|^2\). From the preceding characteristic-set estimates, we have
\[
    e|_{\Sigma}
    \geq c_\Sigma\rho .
\]

After restricting to \(|x|\leq 2R_0\) and normalizing
\((\tau,\xi)\), the relevant portion of the characteristic set is compact in the phase
variables, with uniform symbol bounds in \(t\). Hence the lower bound above extends
to a fixed conic neighborhood \(U\) of $\Sigma$. Thus,
there exists \(c_0>0\) such that
\[
    e\geq c_0\rho
    \qquad
    \text{on } U.
\]

Choose conic cutoffs \(\chi_+,\chi_-\in S^0\), localizing to neighborhoods of the two
characteristic sheets, such that
\[
    \chi_++\chi_-\equiv 1
\]
on a smaller conic neighborhood \(U_0\Subset U\) of the high-frequency characteristic
set, and such that
\[
    \supp(\chi_++\chi_-)\subset U.
\]
Define
\[
    \chi_{\mathrm{ell}}:=1-\chi_+-\chi_-
\]
 Thus
\(\chi_{\mathrm{ell}}\equiv 0\) on an open neighborhood of \(\Sigma\), and
\(\chi_{\mathrm{ell}}\) is supported in the high-frequency elliptic region. Hence \(p\)
is elliptic on \(\supp\chi_{\mathrm{ell}}\), and
\[
    |p|\gtrsim \tau^2+|\xi|^2
    \qquad
    \text{on } \supp\chi_{\mathrm{ell}}.
\]
We may therefore define
\[
    m:=\chi_{\mathrm{ell}}\frac{C\rho-e}{p},
\]
where \(C>0\) is fixed. Since \(\chi_{\mathrm{ell}}\) vanishes on an open neighborhood
of \(\Sigma\), the quotient is only taken where \(p\neq 0\), and \(m\) extends smoothly
by zero across \(\Sigma\). Moreover, since \(e\in S^2\), \(\rho\in S^2\), and \(p\in S^2\)
is elliptic on \(\supp\chi_{\mathrm{ell}}\), we have
\[
    m\in S^0.
\]

Now compute
\[
    e+pm
    =
    e+\chi_{\mathrm{ell}}(C\rho-e)
    =
    (1-\chi_{\mathrm{ell}})e+C\chi_{\mathrm{ell}}\rho.
\]
Since
\[
    1-\chi_{\mathrm{ell}}=\chi_++\chi_-
\]
is supported in \(U\), where \(e\geq c_0\rho\), we obtain
\[
    e+pm
    \geq
    c_0(1-\chi_{\mathrm{ell}})\rho
    +
    C\chi_{\mathrm{ell}}\rho.
\]
Therefore
\[
    e+pm
    \geq
    \min(c_0,C)\rho.
\]

This proves the desired positivity after adding the Lagrangian correction.
\end{proof}

\section{The two point ILED}

In this section we prove part i) of Theorem~\ref{main}. We start with the following result, which crucially uses the escape function constructed in Section \ref{chap:high_freq}.

\begin{theorem}
\label{thm:high_freq_main}
Let $P$ be an asymptotically flat damped wave operator satisfying Assumption \ref{ass:GCC}, and suppose $\partial_t$ is uniformly timelike and hypersurfaces $\{t = \mathrm{const}\}$ are uniformly spacelike. Then 
\[
\|u\|_{LE^1} \lesssim_{R_0}  \|u\|_{L^2L^2} + \|Pu\|_{LE^*},
\]
for all $u \in \mathcal{S}(\R^{1+3})$  supported in $\{|x|\leq 2R_0\}$.
\end{theorem}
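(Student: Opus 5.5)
We prove the estimate by the positive commutator method, quantizing the symbols of Lemma~\ref{mainlemma}. Since $u$ is supported in $\{|x|\le 2R_0\}$, $\langle x\rangle\approx_{R_0}1$ there, so $\|u\|_{LE^1}\approx_{R_0}\|u\|_{L^2L^2}+\|\partial u\|_{L^2L^2}$ and $\|Pu\|_{LE^*}\approx_{R_0}\|Pu\|_{L^2L^2}$. It therefore suffices to prove
\[
\|\partial u\|_{L^2L^2}^2\lesssim \|u\|_{L^2L^2}^2+\|Pu\|_{L^2L^2}^2 .
\]
We work with the rescaled operator \eqref{resc}. The rescaling is harmless, since it only changes constants depending on $\gamma$ and $R_0$, and $\gamma$ is fixed once Lemma~\ref{mainlemma} is proved.

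\textbf{Step 1: quantization.} Let $q\in S^1_{hom}$ and $m\in S^0_{hom}$ be the symbols from Lemma~\ref{mainlemma}. Modify them near the zero section by a cutoff $\chi(\tau,\xi)$ vanishing for $|(\tau,\xi)|\le1$; this produces genuine symbols in $S^1$ and $S^0$, and the resulting errors have order $-\infty$. Multiply these symbols by a spatial cutoff $\tilde\chi_{<4R_0}(x)$ equal to $1$ on the support of $u$; this does not affect any pairing with $u$. Set $Q=q^w$ and $M=m^w$, and decompose $P=P_s+i\gamma aD_t$ with $P_s$ symmetric, whose principal symbol is $p$. The standard identity is
\[
2\,\mathrm{Im}\langle Pu,Qu\rangle=\langle i[P_s,Q]u,u\rangle+2\,\mathrm{Re}\langle \gamma aD_tu,Qu\rangle+\text{(terms from non-selfadjointness of }Q).
\]
Add the Lagrangian term $\mathrm{Re}\langle Pu,Mu\rangle$. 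The total operator then has principal symbol (order $2$)
\[
H_pq+2\gamma\tau a q+pm .
\]
The sign convention for $H_p$ versus $i[P,Q]$ has to be matched; if needed, replace $q$ by $-q$.

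\textbf{Step 2: sharp Gårding.} By Lemma~\ref{mainlemma}, on $\{|x|\le 4R_0\}$ the symbol is $\ge c(\tau^2+|\xi|^2)-C$, with constants uniform in $t$. The uniformity in $t$ holds because the symbols have bounds uniform in $t$: the AF norm is uniform in time, and Assumption~\ref{ass:GCC} is uniform in time windows. The sharp Gårding inequality (or Fefferman--Phong) on $\R^4$ then yields
\[
c\|\partial u\|_{L^2}^2-C\|u\|_{H^{1/2}}^2\le 2\,\mathrm{Im}\langle Pu,Qu\rangle+\mathrm{Re}\langle Pu,Mu\rangle+C\|u\|_{H^{1/2}}^2 .
\]
The subprincipal contributions, from the Weyl calculus, from $a$ being a multiplication operator composed with $D_t$, and from commutator remainders, all have order $1$, so they are bounded by $\|u\|_{H^{1/2}}^2$.

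\textbf{Step 3: closing.} The right side is bounded by $\|Pu\|_{L^2}\big(\|Qu\|_{L^2}+\|Mu\|_{L^2}\big)\lesssim\|Pu\|_{L^2}\|u\|_{H^1}$. The term $\|u\|_{H^{1/2}}^2$ is handled by interpolation: $\|u\|_{H^{1/2}}^2\le\varepsilon\|u\|_{H^1}^2+C_\varepsilon\|u\|_{L^2}^2$. Absorbing then gives the claimed bound, with $\|\partial u\|_{L^2}$ comparable to the $H^1$ norm modulo $\|u\|_{L^2}$.

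\textbf{Main obstacle.} The main obstacle is Step 2: $q$ is only smooth away from the zero section and built from the flow on all of $T^*\R^4$, with $t$ unbounded. One must check that $q$, $m$ really belong to the symbol classes $S^1$, $S^0$ with seminorms uniform in $t$. This is needed so that the Weyl calculus and Gårding constants do not depend on the time interval.

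This uniformity comes from three facts. First, $L$ is finite, so $q_1^\pm$ and $q_2^\pm$ involve the flow only on bounded intervals $[-L,L]$. Second, the derivatives of the flow up to time $L$ are bounded in terms of the uniform-in-$t$ bounds on $g$ and its derivatives. Third, we restrict to $|x|\le4R_0$.

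A secondary point is the non-symmetric damping term. I would handle it by writing $2\,\mathrm{Re}\langle \gamma a D_tu,Qu\rangle=\langle(\gamma aD_tQ+Q^*\gamma aD_t)u,u\rangle$, whose principal symbol is $2\gamma\tau aq$ since $q$ is real. This requires $q$ to be real-valued, which holds by construction.
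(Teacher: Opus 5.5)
Your proposal is correct and is essentially the paper's proof: the paper quantizes the cut-off symbols of Lemma~\ref{mainlemma} as $Q=q^w-\frac{i}{2}m^w$, which is equivalent to your pair of pairings $2\,\mathrm{Im}\langle Pu,q^wu\rangle+\mathrm{Re}\langle Pu,m^wu\rangle$, then applies Fefferman--Phong to the order-two symbol $H_pq+2\gamma\tau aq+pm$ and absorbs $\langle Pu,Qu\rangle$ and the $OPS^1$ remainder by Cauchy--Schwarz and interpolation, just as you do. One small fix: G\aa rding needs a lower bound on the symbol everywhere, and multiplying $q,m$ by a spatial cutoff creates the unsigned order-two term $q\,H_p\tilde\chi_{<4R_0}$ in the transition region, so either use the positivity from Lemma~\ref{mainlemma}, which is already global, or observe that the part of the symbol supported away from $\{|x|\le 3R_0\}$ contributes only smoothing errors against $u$.
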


\begin{proof}
We prove the estimate using the escape function constructed in Lemma \ref{mainlemma}.
Let \(q\in S_{hom}^1\) and \(m\in S_{hom}^0\) be the symbols from that lemma. We first cutoff away from zero and define
\[
q_{>1} = \chi_{>1}(|(\tau, \xi)|)q, \quad m_{>1} = \chi_{>1}(|(\tau, \xi)|)q.
\]
We thus have that
\begin{equation}
\label{eq:mainlemma-original-form}
    H_p q_{>1}+2\gamma\tau aq_{>1}+pm_{>1} + r_{-\infty}
    \gtrsim \chi_{>1}(|(\tau, \xi)|)
    \langle x\rangle^{-2}(\tau^2+|\xi|^2).
\end{equation}
where $r_{-\infty}\in S^{-\infty}$.

Let
\[
    Q=q^w-\frac{i}{2}m^w .
\]
Using the commutator identity for \(P=\Box_g+i\gamma aD_t\), we obtain
\begin{equation}
\label{eq:commutator-identity-reduced}
    2\operatorname{Im}\langle Pu,Qu\rangle
    =
    \left\langle
        \operatorname{Op}^w\!\left(H_pq_{>1}+2\gamma\tau aq_{>1}+pm_{>1}\right)u,u
    \right\rangle
    +
    \langle R_1(u), u\rangle
\end{equation}
where $R_1\in OPS^1$; since  $u \in \mathcal{S}(\R^{1+3})$, there are no boundary terms.

We estimate, for all $\delta>0$,
\begin{equation}
\label{eq:remainder-bound}
 |\langle Pu,Qu\rangle| + |\langle R_1(u), u\rangle|
    \leq \delta\|u\|_{LE^1}^2+
   C(\delta) \left(\|u\|_{L^2L^2}^2 + \|Pu\|_{LE^*}^2\right).
\end{equation}

On the other hand, by \eqref{eq:mainlemma-original-form} and Fefferman-Phong, we obtain
\begin{equation}
\label{eq:high-frequency-garding}
    \left\langle
 \operatorname{Op}^w\!\left(H_pq_{>1}+2\gamma\tau aq_{>1}+pm_{>1}\right)u,u
    \right\rangle + 
    C\|u\|_{L^2L^2}^2 
    \gtrsim \|u\|_{LE^1}^2.
\end{equation}

The conclusion follows from \eqref{eq:commutator-identity-reduced}, \eqref{eq:remainder-bound} and \eqref{eq:high-frequency-garding} once we pick a small enough $\delta>0$.
\end{proof}

We will now finish the proof of Theorem~\ref{main} part i). 

Assume first that $Pu\in LE^*$; we will show that
\begin{equation}\label{2ptLE*}
\|u\|_{LE^1[0, T]} + \|\partial u\|_{L^\infty[0, T]L^2}\lesssim  E[u](0)+ E[u](T) + \|Pu\|_{LE^*[0, T]}.
\end{equation}
We pick any $T\ge 4$; the case \(T\le4\) follows from the standard finite-time energy.

Set
\[
    u_{\mathrm{in}}=\chi_{<R_0}u, \quad u_{\mathrm{out}} = \chi_{>R_0/2}u.
\]

It will be enough to obtain the corresponding estimates for $u_{\mathrm{in}}$ and $u_{\mathrm{out}}$.

We first derive the local energy estimate for $u_{\mathrm{out}}$. Using Proposition 3.2 from \cite{MST20} 
we obtain
\begin{equation}
\label{eq:exterior-bound}
    \|u_{\mathrm{out}}\|_{LE^1[0,T]}
    \lesssim\
    \|\partial u(0)\|_{L^2}
    +
    \|\partial u(T)\|_{L^2}
    +
    \|u\|_{L^2[0,T]L^2(A_{\leq R_0})} +
    \|Pu\|_{LE^*[0,T]}
    .
\end{equation}

Next, we derive the local energy estimate for $u_{\mathrm{in}}$. Pick any \(\eta_T\in C_c^\infty((0,T))\) such that
\[
    \eta_T=1 \quad\text{on } [2,T-2],
    \qquad
    \operatorname{supp}\eta_T'\subset [0,2]\cup[T-2,T],
    \qquad
    |\partial_t^k\eta_T|\lesssim_k 1.
\]
We now apply Theorem~\ref{thm:high_freq_main} to
$\eta_T u_{\mathrm{in}}$. We obtain
\[
    \|\eta_Tu_{\mathrm{in}}\|_{LE^1[0,T]}
    \lesssim
    \|\eta_Tu_{\mathrm{in}}\|_{L^2L^2[0,T]}
    +
    \|P(\eta_Tu_{\mathrm{in}})\|_{LE^*[0,T]}.
\]
Expanding,
\[
    P(\eta_Tu_{\mathrm{in}})
    =
    \eta_T Pu_{\mathrm{in}}+[P,\eta_T]u_{\mathrm{in}},
\]
and
\[
    Pu_{\mathrm{in}}
    =
    \chi_{\mathrm{in}}Pu+[P,\chi_{\mathrm{in}}]u.
\]

Therefore
\begin{align}
    \|\eta_Tu_{\mathrm{in}}\|_{LE^1[0,T]}
    \lesssim\;&
    \|u\|_{L^2[0,T]L^2(A_{\leq R_0})}
    +
    \|Pu\|_{LE^*[0,T]}
    +
    \|[P,\chi_{\mathrm{in}}]u\|_{LE^*[0,T]}       \nonumber\\
    &+
    \|[P,\eta_T]u_{\mathrm{in}}\|_{LE^*[0,T]} .
\end{align}

By Hardy's inequality and local-in-time energy estimates, we easily estimate
\begin{equation}\label{terr}
\|[P,\eta_T]u_{\mathrm{in}}\|_{LE^*[0,T]} \lesssim \|u_{\mathrm{in}}\|_{LE^1[0,2]} + \|u_{\mathrm{in}}\|_{LE^1[T-2,T]} \lesssim E[u](0)+E[u](T) + \|Pu\|_{LE^*[0,T]}
\end{equation}

On the other hand,
\begin{equation}\label{outtoin}
\|[P,\chi_{\mathrm{in}}]u\|_{LE^*[0,T]} \lesssim \|\partial u\|_{L^2[0,T]L^2(A_{R_0})} + \|u\|_{L^2[0,T]L^2(A_{R_0})} \lesssim \|u_{\mathrm{out}}\|_{LE^1[0,T]} + \|u\|_{L^2[0,T]L^2(A_{R_0})}
\end{equation}

Due to \eqref{eq:exterior-bound}, \eqref{terr}, and \eqref{outtoin} we obtain
\[
\|\eta_Tu_{\mathrm{in}}\|_{LE^1[0,T]}
    \lesssim E[u](0)+E[u](T) + \|u\|_{L^2[0,T]L^2(A_{\leq R_0})} +\|Pu\|_{LE^*[0,T]}
\]
which by local in time estimates immediately implies
\[
\|u_{\mathrm{in}}\|_{LE^1[0,T]}
    \lesssim E[u](0)+E[u](T) + \|u\|_{L^2[0,T]L^2(A_{\leq R_0})} +\|Pu\|_{LE^*[0,T]}
\]

 To control the supremum of the energy norm, we use the straightforward estimate
 \[
 \|\partial u\|_{L^\infty[0, T]L^2}  \lesssim E[u](0) + \|u\|_{LE^1[0, T]} + \|Pu\|_{LE^*[0,T]}.
 \]

Assume now that $Pu\in L^1L^2$. We now adapt the arguments from the beginning in Section 4 of \cite{MST20}, and construct a parametrix $v$ satisfying $u[0]=v[0]$, and 
\begin{equation}\label{paramL1L2}
E[v](T) + \|v\|_{LE^1[0, T]} + \|P(u-v)\|_{LE^*[0, T]} \lesssim E[u](0) + \|Pu\|_{L^1L^2[0, T]}
\end{equation}

Assuming \eqref{paramL1L2} holds, we can then apply \eqref{2ptLE*} to $u-v$ to finish the proof.

Let $v_{out}^0$ be the solution to
\[
\tilde P v_{out}^0 = Pu, \quad v_{out}^0[0]= u[0]. 
\]
Since local energy estimates hold for the operator $\tilde P$, we have
\[
\|v_{out}^0\|_{LE^1[0, T]} + \|\partial v_{out}^0\|_{L^{\infty}[0,T]L^2}\lesssim E[u](0) + \|Pu\|_{L^1 L^2[0, T]}
\]
Define
\[
v_{out} = \chi_{>R_0}(|x|) v_{out}^0 .
\]
We have
\begin{equation}\label{LEoutL1L2}
\|v_{out}\|_{LE^1[0, T]} + \|\partial v_{out}\|_{L^{\infty}[0,T]L^2}\lesssim E[u](0) + \|Pu\|_{L^1 L^2[0, T]}
\end{equation}

We now construct $v_{in}$ in the interior region $|x|\leq R_0$. WLOG we assume that $T$ is an integer, and write $[0,T]=\cup_{k=0}^{T-1} [k,k+1]$. Let $v_k$ solve $$Pv_k = (1-\chi_{>R_0}(|x|))Pu$$ so that
\[
v_0[0] = (1-\chi_{>R_0}(|x|)) u[0]
\]
\[
v_k[k] = 0, \quad 1\leq k\leq T-1
\]

Let $\chi_k$ be a partition of unity subordinate to $[k, k+2]$ so that $\sum_k \chi_k(t)=1$, and define
\[
v_{in} = \sum_{k=0}^{T-1} \chi_k(t) v_k
\]
By finite speed of propagation and local-in-time energy estimates we have
\begin{equation}\label{LEinL1L2}
\|v_{in}\|_{LE^1[0, T]} + \|\partial v_{in}\|_{L^{\infty}[0,T]L^2}\lesssim E[u](0)+E[u](T) + \|Pu\|_{L^1 L^2[0, T]}
\end{equation}
Finally, we let
\[
v=v_{in} + v_{out}.
\]
By construction, $v[0]=u[0]$. Moreover, \eqref{LEoutL1L2} and \eqref{LEinL1L2} immediately give
\[
\|v\|_{LE^1[0, T]} + \|\partial v\|_{L^{\infty}[0,T]L^2}\lesssim E[u](0) + \|Pu\|_{L^1 L^2[0, T]}
\]
On the other hand, we see that
\[
P(u-v) = [P, \chi_{>R_0}(|x|)]v_{out}^0 +  \sum_{k=0}^{T-1} [P,\chi_k(t)] v_k
\]
We estimate
\[
\|[P, \chi_{>R_0}(|x|)]v_{out}^0\|_{LE^*[0, T]} \lesssim \|v_{out}^0\|_{LE^1[0, T]} \lesssim E[u](0) + \|Pu\|_{L^1 L^2[0, T]}
\]

For the interior term we have when $1\leq k\leq T-1$
\[
\|[P,\chi_k(t)] v_k\|_{LE^*[0, T]} \lesssim \|v_k\|_{LE^1[k, k+2]} \lesssim \|P v_k\|_{LE^*[k, k+2]} \lesssim \| Pu\|_{LE^*[k, k+2]}.
\]
Moreover
\[
\|[P,\chi_0(t)] v_0\|_{LE^*[0, T]} \lesssim E[u](0) + \|v_0\|_{LE^1[0, 2]} \lesssim E[u](0) + \| Pu\|_{L^1L^2[0, 2]}
\]
and after summation
\[
\|\sum_{k=0}^{T-1} [P,\chi_k(t)] v_k\|_{LE^*[0, T]} \lesssim E[u](0) + \|Pu\|_{L^1L^2[0, T]}
\]
This finishes the proof of \eqref{paramL1L2}.

\begin{remark}\label{HFSchw}
Using the arguments above, it is clear that one can now obtain Theorem~\ref{thm:high_freq_main} without the assumption that $u$ is supported in $\{|x|\leq 2R_0\}$.
\end{remark}

\section{Local Energy Decay}
\label{chap:main_theorem}

In this section we prove the second part of our main theorem. Since this is very similar to the previous work \cite{MST20}, we will only sketch the argument.

We first state two results that will be used to control the medium and low frequencies. For the proofs, we refer to \cite{MST20}, Theorems 5.4 and 6.1

\begin{theorem}\label{thm:medium_freq_main}
Let $P$ be an asymptotically flat damped wave operator, and suppose that $\partial_t$ is uniformly timelike. Then, for any $\delta > 0$, there exists a bounded, non-decreasing radial weight $\varphi = \varphi(\ln(1 + r))$ such that for all $u \in \mathcal{S}(\mathbb{R}^4)$, we have
\begin{multline}\label{eq:medium_freq_estimate}
\left\|(1 + \varphi'')^{1/2} e^\varphi (\nabla_x u, \langle r \rangle^{-1}(1 + \varphi') u)\right\|_{LE} + \left\|(1 + \varphi')^{1/2} e^\varphi \partial_t u\right\|_{LE} \\
\lesssim \|e^\varphi P u\|_{LE^*} + \delta \left\|(1 + \varphi')^{1/2} e^\varphi u\right\|_{LE} + \left\|\langle r \rangle^{-1} (1 + \varphi'')^{1/2}(1 + \varphi') e^\varphi \partial_t u\right\|_{LE}.
\end{multline}
\end{theorem}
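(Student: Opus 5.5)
We prove Theorem~\ref{thm:medium_freq_main} by a Carleman-type positive commutator argument with the exponential weight $e^{\varphi}$, following \cite{MST20}, Theorem 5.4. The damping term $ia D_t$ is a compactly supported, first order, lower order perturbation, and it is absorbed by the last term on the right-hand side. The first step is conjugation. Set $v=e^{\varphi}u$ and $P_\varphi = e^{\varphi}Pe^{-\varphi}$. We then split $P_\varphi$ into its symmetric part $P^s_\varphi$ and its antisymmetric part $P^a_\varphi$. Since $\varphi$ is radial and depends only on $x$, the conjugation affects only the spatial part:
\[
P^s_\varphi = D_\alpha g^{\alpha\beta}D_\beta - g^{ij}\partial_i\varphi\,\partial_j\varphi,\qquad
P^a_\varphi \approx -i\bigl(g^{ij}\partial_i\varphi D_j + D_j g^{ij}\partial_i\varphi\bigr) + \text{(cross terms in } g^{0j}),
\]
and the damping contribution $ia e^{\varphi}D_t e^{-\varphi} = iaD_t$ is added to the skew part. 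We choose $\varphi=\varphi(s)$ with $s=\ln(1+r)$, bounded and nondecreasing. Following \cite{MST20}, we also make $\varphi'$ large on a long region, which plays the role of a large parameter, and we let $\varphi'$ decay so that $\varphi$ stays bounded.

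The second step is the multiplier identity. We pair $P_\varphi v$ with $Qv$, where $Q$ is the Morawetz-type operator $Q = \beta(r)\,\partial_r + \tfrac12(\nabla\cdot\beta) + \text{(lower order)}$ and $\beta$ is adapted to $\varphi$ as in \cite{MST20}. We then write
\[
2\,\mathrm{Re}\langle P_\varphi v, Qv\rangle = \langle i[P^s_\varphi,P^a_\varphi]v,v\rangle + \dots
\]
and compute the symbol of the commutator. Its leading terms are:
\begin{itemize}
\item $\varphi''$-weighted control of $|\nabla_x v|^2$,
\item $(1+\varphi')^2\varphi''\langle r\rangle^{-2}|v|^2$ coming from the $|\nabla\varphi|^2$ potential,
\item $(1+\varphi')|\partial_t v|^2$ from the pseudoconvexity of the level sets $r=\text{const}$ for the flat part.
\end{itemize}
Asymptotic flatness, $\|g-m\|_{AF_{>R_0}}\le\mathbf c$ with the slowly varying $c_j$, makes the metric errors small relative to these positive terms outside $R_0$. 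We choose $\varphi$ so that the weight has $\varphi''\gtrsim c_j$ where needed; this mirrors Proposition~\ref{prop:f}. Inside $R_0$ there is no smallness. There we do not try to obtain positivity for the $\partial_t v$ term; it is the error term $\|\langle r\rangle^{-1}(1+\varphi'')^{1/2}(1+\varphi')e^\varphi\partial_t u\|_{LE}$ that appears on the right-hand side. The time derivatives of $g$ produce terms with a $\partial_t$ factor, and these are bounded the same way, so no slow variation is needed.

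The third step treats the damping and closes the estimate. The damping term contributes $\mathrm{Re}\langle iaD_t v, Qv\rangle$. Since $a$ is compactly supported, this is bounded by $\|\langle r\rangle^{-1}(1+\varphi')e^{\varphi}\partial_t u\|_{LE}$ times a quantity controlled by the left-hand side. A Cauchy--Schwarz splitting then absorbs it, up to the last right-hand term. The remaining zeroth order errors are placed into $\delta\|(1+\varphi')^{1/2}e^\varphi u\|_{LE}$ by a Hardy inequality, and the forcing term is handled by $|\langle e^\varphi Pu,Qv\rangle|\lesssim \|e^\varphi Pu\|_{LE^*}\,\|Qv\|_{LE}$ followed by absorption.

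The main obstacle is the choice of $\varphi$, because it must satisfy two competing requirements. The commutator must be positive uniformly in $t$ in the transition region around $R_0$, where $g$ is not close to flat, and at the same time $\varphi$ must stay bounded while $\varphi'$ is large enough to dominate $\delta$. I would first take the explicit piecewise construction of \cite{MST20}, which is linear in $s$ on a long interval and then flattened using the $c_j$. I would then check that its time-independent symbol bounds survive the $t$-dependence of $g$. This requires only that $\sup_t$ of the AF norms is finite, which holds.
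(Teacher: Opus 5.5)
Your proposal is correct and takes the same route as the paper. The paper gives no argument of its own; it simply invokes the Carleman estimate of \cite{MST20}, Theorem 5.4. Your observation that the compactly supported damping term $a\partial_t u$ is controlled in $LE^*$ by the last term on the right-hand side (on $\supp a$ one has $\langle r\rangle\lesssim_{R_0}1$, and the weights there are $\ge 1$) is exactly what justifies that citation. It also means you could apply the undamped estimate directly to $\Box_g u = Pu - a\partial_t u$ instead of redoing the commutator computation.
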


The parameter $\delta > 0$ allows us to absorb error terms arising from the medium-frequency cutoff, and its presence is essential for controlling arbitrarily large time-frequency ranges while keeping the weight $\varphi$ bounded.

The main theorem in the low-frequency regime is as follows:
\begin{theorem}
\label{thm:low_freq_main}
    Let $P$ be an asymptotically flat damped wave operator, and suppose that $\pd_t$ is uniformly
    timelike. Then,
    \begin{equation}
        \norm{u}_{LE^1} \lesssim \norm{\pd_t u}_{LE^1_c} + \norm{Pu}_{LE^*}
    \end{equation}
    for all $u \in \mathcal{S}(\R^4)$. 
\end{theorem}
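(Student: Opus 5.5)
The plan is to treat the low frequency regime as an elliptic problem in $x$, pointwise in $t$. All terms containing a time derivative are regarded as perturbations and moved to the right hand side, where they are paid for by $\|\partial_t u\|_{LE^1_c}$; I take $LE^1_c$ to be the (weighted) $LE^1$-type norm of $\partial_t u$ from \cite{MST20}. Concretely, after the normalization $g^{00}=1$ of Section~\ref{sec:hamilton_flow}, split
\[
P = P_0 + P_1, \qquad P_0 = -D_i g^{ij} D_j ,\qquad P_1 = D_t^2 - D_t g^{0j}D_j - D_j g^{0j} D_t + i a D_t .
\]
By \eqref{eq:uniform_ellipticity}, $P_0$ is uniformly elliptic for every fixed $t$. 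Every term of $P_1$ carries at least one factor $D_t$. The damping term $iaD_t u$ is compactly supported, since $a\in C_c^\infty$, so it lies in $LE^*$ with norm bounded by $\|\partial_t u\|_{LE^1_c}$. It therefore causes no trouble, and the estimate is essentially the undamped one of \cite{MST20}, Theorem 6.1.

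\textbf{Step 1: a uniform elliptic estimate for $P_0$.} For fixed $t$, pair $P_0 u = f$ with $\bar u$ and integrate over $\R^3$. Uniform ellipticity gives
\[
\|\nabla_x u(t)\|_{L^2}^2 \lesssim \Big|\int f \bar u\, dx\Big| .
\]
The key point is that in three dimensions Hardy's inequality bounds $\sup_j\|\langle x\rangle^{-1/2}\langle x\rangle^{-1}u\|_{L^2(A_{2^j})}$ by $\|\nabla_x u\|_{L^2}$. Hence the pairing is bounded by $\|f\|_{LE^*}\|\nabla_x u\|_{L^2}$, using the duality between $LE$ and $LE^*$ on each time slice. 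This gives, for each $t$,
\[
\|\nabla_x u(t)\|_{L^2} + \sup_j \|\langle x\rangle^{-3/2}u(t)\|_{L^2(A_{2^j})} \lesssim \sum_j\|\langle x\rangle^{1/2} f(t)\|_{L^2(A_{2^j})}.
\]
Squaring, integrating in $t$ and using Minkowski's inequality to exchange the $\ell^1_j$ sum with $L^2_t$ yields $\|u\|_{LE^1}\lesssim \|P_0u\|_{LE^*} + (\text{time-derivative part of } LE^1)$. No smallness or non-trapping is needed here, because $P_0$ is nonnegative, so there is no zero resonance.

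\textbf{Step 2: the time-derivative terms.} Rather than placing $P_1u$ in $LE^*$, which would require an unavailable $\langle x\rangle^{1/2}$ weight on $\partial_t^2 u$, I keep these terms in divergence form inside the spacetime pairing $\int\!\!\int P u\,\bar u\,dx\,dt$ and integrate by parts. The $D_t^2$ term gives $\int\!\!\int |\partial_t u|^2$. The mixed terms give $\int\!\!\int g^{0j}\partial_t u\,\overline{\partial_j u}$, plus terms in which $\partial_t g$ or $\partial_x g$ falls on a coefficient; the latter are bounded by \eqref{AFcons} and the $AF$ norm.
\begin{itemize}
\item In the region $|x|\le R_0$ all weights are comparable to $1$. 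Cauchy--Schwarz then bounds these contributions by $\|\partial_t u\|_{LE^1_c}\|u\|_{LE^1}$.
\item In the far region, $|g^{0j}|\lesssim c_j$ on $A_{2^j}$ with $\sum c_j\lesssim\mathbf c\ll 1$. The dyadic pieces are bounded by $c_j\|\langle x\rangle^{-1/2}\partial_tu\|\,\|\langle x\rangle^{1/2}\nabla u\|$ on $A_{2^j}$, and I would redistribute the weights to obtain $\mathbf c\|u\|_{LE^1}^2 + C\|\partial_t u\|_{LE^1_c}^2$.
\end{itemize}
Absorbing $\mathbf c\|u\|_{LE^1}^2$ and then applying Cauchy--Schwarz with $\varepsilon$ closes the estimate. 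Schwartz decay of $u$ removes all boundary terms.

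\textbf{Main obstacle.} I expect the difficulty to be the weight bookkeeping in the far-field cross terms $g^{0j}\partial_t\partial_j u$. There the $LE^1$ and $LE^*$ weights are mismatched by a factor $\langle x\rangle$, and only the decay $c_j$ of $g^{0j}$, together with the precise definition of $LE^1_c$, can compensate. If the direct pairing loses too much, I would instead follow \cite{MST20}. That means conjugating by a bounded radial weight $\varphi(\ln(1+r))$ as in Theorem~\ref{thm:medium_freq_main} and running a Morawetz-type multiplier $\varphi' x\cdot\nabla$ at zero frequency, whose positive commutator produces the missing $\langle x\rangle^{-1}$ gain.
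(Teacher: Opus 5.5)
There is a genuine gap in your argument. Your treatment of the damping term is fine: it is compactly supported in $x$, so $\|a\pa_t u\|_{LE^*}$ is paid for by $\|\pa_t u\|_{LE^1_c}$. That is also the only adaptation the paper implicitly makes, since it gives no proof and simply cites \cite{MST20}, Theorem 6.1. The gap is in your attempt to reprove the undamped estimate, and it sits in the core mechanism rather than in the far-field bookkeeping you flag. In Step 1, duality gives
\[
\Bigl|\int f\,\bar u\,dx\Bigr|\le \Bigl(\sum_k\|\la x\ra^{1/2}f\|_{L^2(A_{2^k})}\Bigr)\sup_k\|\la x\ra^{-1/2}u\|_{L^2(A_{2^k})},
\]
whereas Hardy's inequality only bounds $\sup_k\|\la x\ra^{-3/2}u\|_{L^2(A_{2^k})}$ by $\|\nabla_x u\|_{L^2}$. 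You are off by a factor $\la x\ra$. Your fixed-time inequality is in fact false. Take $P_0=\Delta$ and $u=u_0(x/\lambda)$ with $u_0\in C_c^\infty(\{1<|x|<2\})$: the left side is $\gtrsim\lambda^{1/2}$, while the right side stays $O(1)$ as $\lambda\to\infty$. The correct zero-frequency bound has $\sup_k\|\la x\ra^{-1/2}\nabla u\|_{L^2(A_{2^k})}$ on the left. It does not follow from the energy identity even for the flat Laplacian; it needs a radial (Morawetz-type) multiplier or kernel bounds at large $|x|$. For a large perturbation near the origin, the positivity of $D_ig^{ij}D_j$ only excludes zero resonances qualitatively. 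That must then be upgraded to a quantitative bound, uniform in $t$, by a uniqueness/compactness argument.

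Step 2 fails for the same reason. After pairing with $\bar u$, the $D_t^2$ term gives $\iint|\pa_t u|^2$ with the sign opposite to $\iint g^{ij}\pa_i u\,\overline{\pa_j u}$, so it must be bounded from above.
\begin{itemize}
\item On $A_{2^k}$ this term is comparable to $2^k\|\la x\ra^{-1/2}\pa_t u\|_{L^2L^2(A_{2^k})}^2$.
\item The far-field cross terms there are of size $c_k2^k\|\la x\ra^{-1/2}\pa_t u\|\,\|\la x\ra^{-1/2}\nabla u\|$.
\item The main term $\iint Pu\,\bar u$ again needs $\|u\|_{LE}$ rather than $\|\la x\ra^{-1}u\|_{LE}$.
\end{itemize}
No local-energy norm of $\pa_t u$ absorbs these growing weights. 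Nor can $LE^1_c$ carry growing spatial weights, because the proof of Theorem~\ref{thm:main_led} uses $\|\pa_t u_l\|_{LE^1_c}\lesssim\tau_l\|u_l\|_{LE^1}$. The underlying reason is that at time frequency $\tau$ the region $|x|\gtrsim|\tau|^{-1}$ is wave-like, so the time derivatives are not a perturbation of $P_0$ there.

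To prove the estimate you must treat the exterior as a wave problem. There you use the local energy estimate for the small asymptotically flat perturbation of $\Box$ (the property assumed for $\tilde P$; cf.\ also \eqref{eq:exterior-bound}), which leaves only compactly supported errors. The elliptic structure of $P_0$ should be used only in a compact region, where the $D_t$ terms are exactly what $\|\pa_t u\|_{LE^1_c}$ controls. Your fallback points in this direction, but as written it is only a contingency, so the proposal does not establish the statement.
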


The main result of the section is the following:
\begin{theorem}[Local energy decay]
\label{thm:main_led}
Let $P$ be an asymptotically flat damped wave operator satisfying the uniform geometric control condition. Assume $\partial_t$ is uniformly timelike and the constant-time hypersurfaces are uniformly spacelike. Assume in addition that the metric is slowly varying in time in the sense of Assumption~\ref{asmp:sv}. Then
\begin{equation}\label{eq:main_led}
  \|u\|_{LE^{1}}\lesssim \|Pu\|_{LE^{*}}
\end{equation}
for all $u \in \mathcal{S}(\R^{1+3})$.
\end{theorem}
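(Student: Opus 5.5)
We follow the scheme of \cite{MST20}: split $u$ into high, medium and low time-frequencies, estimate each piece with the corresponding estimate, and use the slow variation (Assumption~\ref{asmp:sv}) to make the commutator errors from the time-frequency cutoffs small, so they can be absorbed. Fix a large threshold $\tau_1$ and a small threshold $\tau_0$. Let $\chi_{hi}(D_t)$, $\chi_{med}(D_t)$ and $\chi_{lo}(D_t)$ be a smooth partition of unity in the time frequency $\tau$, localizing to $|\tau|\gtrsim \tau_1$, $\tau_0\lesssim|\tau|\lesssim\tau_1$ and $|\tau|\lesssim\tau_0$ respectively. Then $u=u_{hi}+u_{med}+u_{lo}$. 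The coefficients of $P$ depend on $t$, so $[P,\chi(D_t)]$ does not vanish. Its principal part involves $\partial_t g$ and is bounded from $LE^1$ to $LE^*$ with norm $\lesssim \|\partial_t g\|_{AF}<\epsilon$. The damping $a$ is time-independent and commutes with $D_t$. Hence $\|P u_{\bullet}\|_{LE^*}\lesssim \|Pu\|_{LE^*}+\epsilon\|u\|_{LE^1}$ for each of the three pieces.

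\emph{High frequencies.} We apply Theorem~\ref{thm:high_freq_main}, without the support restriction by Remark~\ref{HFSchw}, to $u_{hi}$. This gives $\|u_{hi}\|_{LE^1}\lesssim \|u_{hi}\|_{L^2L^2(A_{\le R_0})}+\|Pu_{hi}\|_{LE^*}$. On the support of $\chi_{hi}$ we have $|\tau|\ge\tau_1$, so the lower-order term obeys $\|u_{hi}\|_{L^2L^2(A_{\le R_0})}\lesssim \tau_1^{-1}\|\partial_t u_{hi}\|_{LE}$, up to tails from the spatial cutoff that are again small. Choosing $\tau_1$ large absorbs this term into the left-hand side.

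\emph{Medium frequencies.} We apply Theorem~\ref{thm:medium_freq_main} to $u_{med}$ with $\delta$ small. On the medium range, $\|\langle r\rangle^{-1}(1+\varphi')(1+\varphi'')^{1/2}e^\varphi\partial_t u\|_{LE}$ should be controlled by $\tau_1$ times the $u$-term, and the estimate has to be closed against it. As in \cite{MST20}, this is done with the boundedness of $\varphi$ combined with the weighted $u$-term on the left. The one new ingredient is the damping term $ia D_t$ in $P$. It is compactly supported in space and has a favorable sign in the energy identity, so it should enter only as a bounded lower-order perturbation, $\|a\partial_t u\|_{LE^*}\lesssim \|\partial_t u\|_{LE(A_{\le R_0})}$. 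Here we must make sure that the constant does not depend on the frequency window in a way that breaks the absorption. \emph{Low frequencies.} Theorem~\ref{thm:low_freq_main} gives $\|u_{lo}\|_{LE^1}\lesssim \|\partial_t u_{lo}\|_{LE^1_c}+\|Pu_{lo}\|_{LE^*}$, and $\|\partial_t u_{lo}\|_{LE^1_c}\lesssim \tau_0\|u_{lo}\|_{LE^1}$ because of the frequency localization, so this term is absorbed for small $\tau_0$.

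Summing the three bounds gives $\|u\|_{LE^1}\lesssim \|Pu\|_{LE^*}+(\epsilon+\tau_1^{-1}+\tau_0+\delta)\|u\|_{LE^1}$, where the implicit constants depend on $\tau_0,\tau_1$. The parameters have to be chosen in the right order: first $\tau_1$ and the high-frequency constant, then $\tau_0$, then $\delta$, and only then $\epsilon$ small relative to all of these. For $u\in\mathcal S$ every norm above is finite, so absorbing yields \eqref{eq:main_led}. We expect the medium-frequency step to be the main obstacle. The high-frequency estimate, which rests on GCC, has constants depending on $\gamma$ and $R_0$, while the medium-frequency Carleman estimate carries the weight $e^\varphi$, whose size depends on the frequency range. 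Matching these two estimates across the transition region, and checking that the damping term does not destroy the Carleman positivity in \cite{MST20}, is where the argument is most delicate. There we would use $a\ge0$, so that $2\gamma\tau a$ contributes with a good sign after multiplying by $\partial_t u$-type multipliers, and otherwise treat it as compactly supported error absorbed by the high-frequency estimate.
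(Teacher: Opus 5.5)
Your proposal is correct and follows the paper's proof essentially step for step. It uses the same low/medium/high splitting in $\tau$, Remark~\ref{HFSchw} plus Plancherel in $t$ for $u_{hi}$, Theorem~\ref{thm:low_freq_main} with $\tau_0$ small, Theorem~\ref{thm:medium_freq_main} with $\delta$ chosen after the thresholds, and the slow-variation commutator bound of \cite{MST20}, with $a$ commuting with $\chi(D_t)$ because it is time-independent. Theorem~\ref{thm:medium_freq_main} is already stated for damped operators, so your concern about the damping spoiling the Carleman positivity does not need to be resolved here.

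One small correction: $[P,\chi(D_t)]$ is not by itself bounded $LE^1\to LE^*$ with norm $O(\epsilon)$. Its principal symbol is a multiple of $\partial_t p\,\chi'(\tau)$, of size $\epsilon|\xi|^2/|\tau|$ on its support, which is not controlled by $|(\tau,\xi)|$ where $|\xi|\gg|\tau|$. There one must use the ellipticity of $P$, so the correct bound is $\|[P,\chi(D_t)]u\|_{LE^*}\lesssim\epsilon(\|u\|_{LE^1}+\|Pu\|_{LE^*})$ as in \cite{MST20}. This still gives your $\|Pu_\bullet\|_{LE^*}\lesssim\|Pu\|_{LE^*}+\epsilon\|u\|_{LE^1}$, so the conclusion is unaffected.
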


\begin{proof}

The proof proceeds by decomposing solutions into low, medium, and high time-frequency components, applying the previous estimates  to each piece, and then controlling the resulting commutator errors.

Fix thresholds $0<\tau_l\ll 1\ll \tau_h$, and choose an even function $\eta\in C_c^\infty(\R)$ with
\[
0\le \eta\le 1,\qquad
\eta(s)=1\ \text{for }|s|\le \tfrac12,\qquad
\eta(s)=0\ \text{for }|s|\ge 1.
\]
The time-frequency symbols are then defined by
\[
q_l(\tau):=\eta(\tau/\tau_l),\qquad
q_{\le h}(\tau):=\eta(\tau/\tau_h),
\]
\[
q_h(\tau):=1-q_{\le h}(\tau),\qquad
q_m(\tau):=q_{\le h}(\tau)-q_l(\tau).
\]
The symbol $q_l$ localizes to $|\tau|\lesssim \tau_l$, $q_m$ localizes to the intermediate band $\tau_l\lesssim |\tau|\lesssim \tau_h$, and
\[
q_h(\tau)=1 \quad \text{for } |\tau|\ge \tau_h
\qquad\text{and}\qquad
q_h(\tau)=0 \quad \text{for } |\tau|\le \tfrac12\tau_h.
\]
For clean separation one may assume $\tau_l\le \tau_h/4$ so that $q_m\ge 0$.

Quantizing via the Weyl calculus gives operators
\[
Q_l:=\opw(q_l),\qquad Q_m:=\opw(q_m),\qquad Q_h:=\opw(q_h),
\]
satisfying $I=Q_l+Q_m+Q_h$. For any $u$, we therefore have the decomposition
\[
u = Q_l u + Q_m u + Q_h u =: u_l+u_m+u_h.
\]

By Remark~\ref{HFSchw} and Plancherel's formula, we have
\[
\|u_h\|_{LE^1}\lesssim \|u_h\|_{L^2L^2(|x|\leq 2R_0)} + \|Pu_h\|_{LE^*} \lesssim \tau_h^{-1}\|u_h\|_{LE^1} + \|Pu_h\|_{LE^*}
\]
and by picking $\tau_h$ large enough, we obtain
\[
\|u_h\|_{LE^1}\lesssim \|Pu_h\|_{LE^*}
\]

Similarly, Theorem~\ref{thm:low_freq_main} and Plancherel's formula yield
\[
\|u_l\|_{LE^1} \lesssim \|\partial_t u_l\|_{LE^1_c} + \|Pu_l\|_{LE^*} \lesssim \tau_l\|u_l\|_{LE^1} + \|Pu_l\|_{LE^*}
\]
and by picking $\tau_l$ small enough, we obtain
\[
\|u_l\|_{LE^1}\lesssim \|Pu_l\|_{LE^*}
\]
Finally, choosing $\delta>0$ small enough (depending on $\tau_h$ and $\tau_l$) in Theorem~\ref{thm:medium_freq_main}, we obtain
\[
\|u_m\|_{LE^1} \lesssim \|Pu_m\|_{LE^*}
\]

We now use Proposition 7.2 in \cite{MST20} to control the commutators, which states that
\[
\|[P, \eta^w]u\|_{LE^*} \lesssim \epsilon\left(\|u\|_{LE^1} + \|Pu\|_{LE^*}\right).
\]
We note here that since the damping term $a(x)$ is time-independent, the damping term commutes with $\eta^w$.

After rescaling, one easily obtains for all $i\in\{l, m, h\}$ that
\[
\|[P, Q_i]\|_{LE^*} \lesssim \epsilon\left(\|u\|_{LE^1} + \|Pu\|_{LE^*}\right)
\]
For $\epsilon$ small enough the commutators can thus be absorbed on the left hand side. This finishes the proof.

\end{proof}

\subsection{Local Energy Decay with Initial Data}

We now establish the full local energy decay estimate, which incorporates initial data. Once again, we will use the ideas from \cite{MST}.

It is enough prove the two-point estimate 
\begin{equation}
\|u\|_{LE^1[0, T]}  \lesssim  E[u](0)+ E[u](T) + \|Pu\|_{LE^*[0, T] + L^1 L^2[0, T]}.
\end{equation}

Indeed, due to Assumption~\ref{asmp:sv} and energy estimates, we obtain
\[
E[u](T)\lesssim \|\partial u\|_{L^\infty[0, T]L^2}  \lesssim E[u](0) +  C(\epsilon) \|Pu\|_{LE^*[0, T] + L^1 L^2[0, T]} + \epsilon \left(\|u\|_{LE^1[0, T]} + \|\partial u\|_{L^\infty[0, T]L^2} \right) 
\]
and the last term can be absorbed for small enough $\epsilon$.

Assume first that $Pu\in LE^*$; we will show that
\begin{equation}\label{2pt}
\|u\|_{LE^1[0, T]}  \lesssim  E[u](0)+ E[u](T) + \|Pu\|_{LE^*[0, T]}.
\end{equation}
We will construct a parametrix $v$ satisfying $u[0]=v[0]$, $u[T]=v[T]$ and 
\begin{equation}\label{param}
\|v\|_{LE^1[0, T]} + \|P(u-v)\|_{LE^*[0, T]} \lesssim E[u](0) + \|Pu\|_{LE^*[0, T]}
\end{equation}

The conclusion \eqref{2pt} now follows after applying Theorem~\ref{thm:main_led} to $u-v$.

We first construct an approximate solution in the exterior region $|x|\geq R_0$.

Let $v_{out}^0$ be the solution to
\[
\tilde P v_{out}^0 = Pu, \quad v_{out}^0[0]= u[0]. 
\]
Since local energy estimates hold for the operator $\tilde P$, we have
\[
\|v_{out}^0\|_{LE^1[0, T]} + \|\partial v_{out}^0\|_{L^{\infty}[0,T]L^2}\lesssim E[u](0) + \|Pu\|_{LE^*[0, T]}
\]
Similarly, let $v_{out}^T$ to be the solution to
\[
\tilde P v_{out}^T = Pu, \quad v_{out}^0[T]= u[T]. 
\]
which satisfies
\[
\|v_{out}^T\|_{LE^1[0, T]} + \|\partial v_{out}^T\|_{L^{\infty}[0,T]L^2}\lesssim E[u](T) + \|Pu\|_{LE^*[0, T] + L^1 L^2[0, T]}
\]

We now let $\chi_T(t)$ be identically $1$ for $0\leq t\leq T/2$ and supported in $t\leq 3T/4$. Define
\[
v_{out} = \chi_{>R_0}(|x|) \left(\chi_{T}(t)v_{out}^0 + (1-\chi_{T}(t))v_{out}^T\right).
\]
Clearly 
\begin{equation}\label{LEout}
\|v_{out}\|_{LE^1[0, T]} + \|\partial v_{out}\|_{L^{\infty}[0,T]L^2}\lesssim E[u](0) + E[u](T) + \|Pu\|_{LE^*[0, T]}
\end{equation}

We now construct $v_{in}$ to be an approximate solution in the interior region $|x|\leq R_0$. WLOG we assume that $T$ is an integer, and write $[0,T]=\cup_{k=0}^{T-1} [k,k+1]$. Let $v_k$ solve $$Pv_k = (1-\chi_{>R_0}(|x|))Pu$$ so that
\[
v_0[0] = (1-\chi_{>R_0}(|x|)) u[0]
\]
\[
v_{T-1}[T] = (1-\chi_{>R_0}(|x|)) u[T]
\]
\[
v_k[k] = 0, \quad 1\leq k\leq T-1
\]

Let $\chi_k$ be a partition of unity subordinate to $[k, k+2]$ so that $\sum_k \chi_k(t)=1$, and define
\[
v_{in} = \sum_{k=0}^{T-1} \chi_k(t) v_k
\]
By finite speed of propagation and local-in-time energy estimates we have
\begin{equation}\label{LEin}
\|v_{in}\|_{LE^1[0, T]} + \|\partial v_{in}\|_{L^{\infty}[0,T]L^2}\lesssim E[u](0)+E[u](T) + \|Pu\|_{LE^*[0, T]}
\end{equation}
Finally, we let
\[
v=v_{in} + v_{out}.
\]
By construction, $v[0]=u[0]$ and $v[T]=u[T]$. Moreover, \eqref{LEout} and \eqref{LEin} immediately give
\[
\|v\|_{LE^1[0, T]} + \|\partial v\|_{L^{\infty}[0,T]L^2}\lesssim E[u](0)+E[u](T) + \|Pu\|_{LE^*[0, T]}
\]

Finally, we see that
\[
P(u-v) = [P, \chi_{>R_0}(|x|) \chi_{T}(t)]v_{out}^0 + [P, \chi_{>R_0}(|x|) (1-\chi_{T}(t))]v_{out}^T + \sum_{k=0}^{T-1} [P,\chi_k(t)] v_k
\]
and we estimate
\[
\|[P, \chi_{>R_0}(|x|) \chi_{T}(t)]v_{out}^0\|_{LE^*[0, T]} \lesssim \|v_{out}^0\|_{LE^1[0, T]} \lesssim E[u](0) + \|Pu\|_{LE^*[0, T]}
\]
\[
\|[P, \chi_{>R_0}(|x|) (1-\chi_{T}(t))]v_{out}^T\|_{LE^*[0, T]} \lesssim \|v_{out}^T\|_{LE^1[0, T]} \lesssim E[u](T) + \|Pu\|_{LE^*[0, T]}
\]
For the interior term we have when $1\leq k\leq T-2$
\[
\|[P,\chi_k(t)] v_k\|_{LE^*[0, T]} \lesssim \|v_k\|_{LE^1[k, k+2]} \lesssim \|P v_k\|_{LE^*[k, k+2]} \lesssim \| Pu\|_{LE^*[k, k+2]}.
\]
Moreover
\[
\|[P,\chi_0(t)] v_0\|_{LE^*[0, T]} \lesssim E[u](0) + \|v_0\|_{LE^1[0, 2]} \lesssim E[u](0) + \| Pu\|_{LE^*[0, 2]}
\]
\[
\|[P,\chi_{T-1}(t)] v_{T-1}\|_{LE^*[0, T]} \lesssim E[u](T) + \|v_{T-1}\|_{LE^1[T-2, T]} \lesssim E[u](T) + \| Pu\|_{LE^*[T-2, T]}
\]
and after summation
\[
\|\sum_{k=0}^{T-1} [P,\chi_k(t)] v_k\|_{LE^*[0, T]} \lesssim E[u](0)+E[u](T) + \|Pu\|_{LE^*[0, T]}
\]
This finishes the proof of \eqref{2pt}. 

If $Pu\in L^1L^2$ we use the parametrix from the previous section and \eqref{2pt} to obtain the desired estimate.

\bibliographystyle{plain}
\bibliography{dissertation}
\end{document}